\documentclass[pdflatex,sn-mathphys-num]{sn-jnl}

\usepackage{graphicx}%
\usepackage{multirow}%
\usepackage{amsmath,amssymb,amsfonts}%
\usepackage{amsthm}%
\usepackage{mathrsfs}%
\usepackage[title]{appendix}%
\usepackage{xcolor}%
\usepackage{textcomp}%
\usepackage{manyfoot}%
\usepackage{booktabs}%
\usepackage[ruled]{algorithm2e}
\usepackage{algpseudocode}%
\usepackage{listings}%
\usepackage{subfigure}
\usepackage{booktabs}
\usepackage{enumitem}
\usepackage{algpseudocode}
\usepackage{tikz}
\usetikzlibrary{arrows.meta}

\theoremstyle{thmstyleone}%
\newtheorem{lemma}{Lemma}
\newtheorem{theorem}{Theorem}
\theoremstyle{thmstyletwo}%
\newtheorem{example}{Example}%
\newtheorem{remark}{Remark}%

\theoremstyle{thmstylethree}%
\newtheorem{assumption}{Assumption}
\begin{document}
	
	\title[Article Title]{An Alternating Minimization Algorithm for\\ Federated Sparse Orthogonal Nonnegative\\ Matrix Factorization}
	
	
	\author[1]{\fnm{Yanjiao} \sur{Zhu}}\email{zhuyj87@mail.sysu.edu.cn}
	
	\author[2]{\fnm{Xianchao} \sur{Xiu}}\email{xcxiu@shu.edu.cn}

	\author[1]{\fnm{Yongyang} \sur{Xiong}}\email{xiongyy25@mail.sysu.edu.cn}
	
	\author*[1]{\fnm{Wanquan} \sur{Liu}}\email{liuwq63@mail.sysu.edu.cn}
	
	\affil*[1]{\orgdiv{School of Intelligent Systems Engineering}, \orgname{Sun Yat-sen University}, \orgaddress{ \city{Guangzhou}, \postcode{510275},  \country{China}}}
	
	\affil[2]{\orgdiv{School of Mechatronic Engineering and Automation}, \orgname{Shanghai University}, \orgaddress{ \city{Shanghai}, \postcode{200444}, \country{China}}}

	
	\abstract{In this paper, we develop an alternating minimization algorithm for federated sparse orthogonal nonnegative matrix factorization (ONMF) with emphases on controlling client drift. To reduce communication frequency, clients perform multiple local updates between communication rounds, biasing updates toward their respective local objectives and giving rise to client drift, particularly under data heterogeneity. In federated sparse ONMF, global nonsmooth regularization further complicates drift control, as proximal mappings generally do not commute with averaging. To address these challenges, we develop an effective optimization algorithm that introduces variable splitting to decouple proximal operator evaluation from communication, allowing clients to handle joint sparsity regularization locally. In addition, the algorithm incorporates a correction term into the local gradients to adjust the update directions and mitigate client drift. We theoretically establish that the proposed algorithm achieves sublinear convergence to a bounded residual error under some mild conditions. Numerical experiments demonstrate favorable convergence behavior on a synthetic dataset and competitive clustering performance on real-world multi-view datasets.}

	\keywords{Nonnegative matrix factorization, Federated learning, Joint sparsity, Alternating minimization }
	
	
	
	\maketitle
	
	\section{Introduction}\label{sec1}

	Nonnegative matrix factorization (NMF) is a powerful dimensionality reduction technique that approximates a nonnegative data matrix by the product of two low-dimensional nonnegative matrices \cite{Saberi-Movahed2025,takahashi2026majorization}. It has broad applications in data clustering \cite{Peng2023}, fault detection \cite{zhang2023structured}, and gene expression classification \cite{Min2023}, and so on. Orthogonal nonnegative matrix factorization (ONMF) is an important extension of NMF and is closely related to $k$-means clustering~\cite{ding2006orthogonal,chen2026orthogonal}. Given a nonnegative data matrix $X\in\mathbb{R}_+^{d\times n}$, ONMF imposes an orthogonality constraint on the nonnegative basis matrix and is formulated as
	\begin{equation}\label{onmf}
		\begin{aligned}
			\min_{U,V}&\quad\|X-UV\|_F^2 \\
			\text{s.t.}&\quad U\ge 0,~ U^{\top} U = I,~ V\ge 0,  
		\end{aligned}
	\end{equation}
	where $U\in\mathbb{R}_+^{d\times r}$ is the basis matrix, $V\in\mathbb{R}_+^{r\times n}$ is the coefficient matrix, $I$ is the identity matrix with a proper dimension, and $r< n$ is the latent dimension.

\subsection{Federated Setting}\label{subsec11}	
	Scaling ONMF to large datasets distributed across multiple clients requires methods that coordinate computation while retaining raw data locally. Federated learning provides a framework for this purpose, allowing a central server to coordinate model training across clients with distributed data \cite{gobet2026federated,wang2024seeking}. 
	
	In this paper, we propose a federated sparse ONMF model involving $m$ clients, each holding a different feature block of the same samples. Specifically, client $p=1,\cdots,m$ has a local data $X_p\in\mathbb{R}_+^{d_p\times n}$ corresponding to the same $n$ samples. Each client maintains a local basis matrix $U_p\in\mathbb{R}_+^{d_p\times r}$, and $V\in\mathbb{R}_+^{r\times n}$ is the shared matrix. It introduces an $\ell_{2,1}$-norm on $V$ to promote joint sparsity, which helps reduce redundancy in the shared coefficient. The resulting optimization model is
	\begin{equation}\label{FMF}
		\begin{aligned}
			\min_{\{U_{p}\},V}&\quad\frac{1}{m}\sum_{p=1}^m\| X_{p} - U_{p} V \|_{F}^{2}+\mu\|V\|_{2,1} \\
			\text{s.t.}&\quad
			U_{p} \ge 0,~U_{p}^{\top} U_{p} = I,~V \ge 0,
		\end{aligned}
	\end{equation}
	where $\mu>0$ is the regularization parameter and $\|V\|_{2,1}=\sum_{i=1}^{r}\|V_{i .}\|_2$ with $V_{i .}$ denoting the $i$th row of $V$.
    
A natural idea is to adopt the standard federated averaging (FedAvg) \cite{pmlr-v54-mcmahan17a} framework for solving Problem~\eqref{FMF} in the federated setting. FedAvg reduces communication frequency by allowing each participating client to perform multiple local stochastic gradient descent steps between communication rounds. Applying this strategy of multiple local updates to Problem~\eqref{FMF} requires updating the two matrix variables repeatedly on each client before aggregation. These local optimization requirements motivate a review of existing ONMF methods, followed by a discussion of federated NMF methods.
	
\subsection{Overview of Related Works}\label{subsec12}	
	For standard NMF, fixing either matrix yields a least squares subproblem, which motivates alternating optimization methods using projected gradients \cite{LIN20119997}, active set solvers \cite{Zhang2014}, and block coordinate updates \cite{Fu2019}. For ONMF, nonnegativity and orthogonality constraints yield a nonconvex feasible set, making global optimization difficult. Various methods have been developed for ONMF and variants, including multiplicative updates \cite{ding2006orthogonal,Choi2008}, convergent additive updates for an orthogonality penalized formulation \cite{MIRZAL2014149}, hierarchical alternating least squares \cite{pmlr-v39-kimura14}, greedy orthogonal pivoting \cite{zhang2019greedy}, and subspace exploration \cite{NIPS2015_eae27d77}. Developing algorithms for sparse ONMF requires handling both nonnegative orthogonality constraints and joint sparsity regularization. Existing methods to nonnegative orthogonality constraints include augmented Lagrangian \cite{pompili2014two} and exact penalty methods \cite{jiang2023exact}. 
    Joint sparsity has been studied using proximal gradient methods \cite{beckhallak2017} and group cardinality approximations \cite{panchen2021}. Iterative hard thresholding methods address sparsity under nonnegativity constraints \cite{pan2017,zhang2024matrix}, while sparse ONMF methods include hierarchical alternating least squares and block proximal linearization \cite{LI2020112785}.
   
	Federated learning is often modeled as a distributed optimization problem \cite{beznosikov2026exploring,stanko2026accelerated}. Existing federated matrix factorization methods coordinate local and global updates through local computation strategies, consensus constraints, and representation sharing. Regarding local computation strategies for models with smooth regularization and convex constraints, federated model averaging (FedMAvg) \cite{wang2022federated} and the differentially private federated clustering (DP-FedC) method ~\cite{li2023differentially} progressively reduce local updates of the shared matrix to limit client drift. Alternatively, federated gradient sharing (FedMGS) \cite{wang2022federated} updates the shared matrix at the server using client gradients and exhibits low sensitivity to data heterogeneity. FedMAvg, FedMGS, and DP-FedC provide sublinear convergence bounds for first-order stationarity measures under their respective assumptions.

Some methods introduce consensus constraints between local copies and a shared matrix, which are often handled using Lagrange multipliers or quadratic penalty terms. For nonsmooth regularization, Hytla et al.~\cite{Hytla2026} employ local and server proximal updates within an alternating direction method of multipliers (ADMM) framework. Dual updates and quadratic penalties promote consensus between local copies and the shared matrix. Under the stated assumptions, their analysis establishes a sublinear convergence bound for a first-order stationarity measure. For orthogonality constraint, Huang et al.~\cite{huang2022efficient} proposed federated multi-view learning (FedMVL), using a smooth orthogonality penalty and introducing auxiliary variables with consensus constraints. Their detailed analysis establishes linear convergence of the expected objective gap for the subproblem, without establishing convergence for the full alternating algorithm. Feng et al.~\cite{ijcai2024p439} integrated matrix factorization with $k$-means, incorporating basis orthogonality constraints, an orthogonality penalty on clustering representations, and graph regularization. Its convergence behavior is supported empirically through objective value trajectories. Their alternating algorithm uses an augmented Lagrangian formulation to coordinate local and global clustering representations. These consensus constraints may help mitigate client drift.

Other methods focus on the structure of representation sharing and the efficiency of information exchange between clients and the server. Li et al.~\cite{li2024partially} proposed partially shared federated multi-view learning (PSFedML), combining NMF with graph regularization and partial representation sharing. The server learns a common representation using selection matrices and broadcasts it to clients while preserving information specific to each view. Its convergence analysis establishes convergence of the objective values. Liu et al.~\cite{Liu2026TPAMI} proposed communication-efficient federated multi-view clustering (CeFMC), which incorporates linear kernel information and approximates local representations using pseudo-labels and centroids. Exchanging these quantities between clients and the server reduces communication overhead. Its analysis establishes the objective value convergence through monotonic improvement and boundedness. 

    However, Problem~\eqref{FMF} combines nonconvex constraints on local matrices with nonsmooth regularization on the shared matrix. Existing methods offer strategies for handling orthogonality \cite{huang2022efficient,ijcai2024p439} or nonsmooth regularization \cite{Hytla2026}, but do not explicitly consider the combination studied. This combination introduces several challenges for federated algorithm design and convergence analysis. Firstly, the proximal mapping associated with the shared matrix generally does not commute with averaging.  Therefore, the server aggregation scheme must account for local regularization to maintain consistency with the global objective. Secondly, multiple local updates can cause client drift, particularly under data heterogeneity. Updates to local matrices further affect the update directions of local copies, making it harder to align local updates with the global objective. Thirdly, empirical stabilization \cite{ijcai2024p439} or objective value convergence \cite{Liu2026TPAMI} alone does not establish stationarity. Existing stationarity guarantees often rely on convex constraints or smooth regularization \cite{wang2022federated,li2023differentially}. Establishing convergence guarantees for Problem~\eqref{FMF} requires jointly analyzing nonconvex local constraints, nonsmooth global regularization, and client drift. These challenges raise a natural question, how to develop a federated algorithm for Problem~\eqref{FMF} that handles nonsmooth regularization, controls client drift, and admits convergence guarantees?
	
	
\subsection{Our Contributions}\label{subsec13}		
Compared with the existing works, the main contributions of this paper are summarized as follows.
	\begin{enumerate}
    \item[(i)] \textbf{(Decoupled proximal updates)} 
    For federated sparse ONMF, we develop a federated alternating minimization algorithm that introduces variable splitting to decouple proximal operator evaluation from communication, addressing the noncommutativity of the proximal mapping and averaging.
	
	 \item[(ii)] \textbf{(Drift correction)}
    We incorporate a correction term into the gradients with respect to local copies to align local update directions with the global objective. This correction mitigates client drift induced by multiple local updates, particularly under data heterogeneity. 

    \item[(iii)] \textbf{(Convergence guarantee)}
    Under additional mild conditions, we establish a sublinear convergence rate for the averaged stationarity residual (see Theorem~\ref{Theorem3}). Numerical experiments on a synthetic dataset also support the convergence results, while those on real-world multi-view datasets demonstrate competitive clustering performance.
\end{enumerate}

	\subsection{Organization}\label{subsec14}
The structure of this paper is as follows.
Section \ref{sec2} introduces some notations and preliminary works.
Section \ref{sec3} establishes the main algorithmic frameworks and conducts convergence analysis.
Section \ref{sec4} validates the superiority of the proposed method.
Section \ref{sec5} concludes this paper.


\section{Notations and Preliminaries}\label{sec2}
\subsection{Notations}\label{subsec15}
	In this paper, denote matrices by capital letters, e.g., $A$, vectors by boldface lowercase letters, i.e.,~$\mathbf{a}$, and scalars by lowercase letters, i.e.,~$a$. For a matrix $A\in\mathbb{R}^{d\times n}$, denote its $i$th row, $j$th column, and $(i,j)$th entry by $A_{i\cdot}$, $A_{\cdot j}$, and $A_{ij}$, respectively. The transpose of $A$ is denoted by $A^{\top}$. The Frobenius norm of $A$ is defined as $\|A\|_{F} =  \sqrt{\sum_{i=1}^{d}\sum_{j=1}^{n} A_{ij}^2}$. The $\ell_{1}$-norm of $A$ is defined as $\|A\|_{1}=\sum\limits_{i=1}^{d}\sum\limits_{j=1}^{n}|A_{ij}|$, where $|\cdot|$ represents the absolute value. As the extension of $\ell_{1}$-norm, the $\ell_{2,1}$-norm is given by $ \|A\|_{2,1} = \sum_{i=1}^d \|A_{i\cdot}\|_2$. In addition, $\text{diag}(A)$ represents a vector consisting of diagonal elements of the matrix $A$. For a point $X$ and a nonempty set $\mathcal S$ in a matrix space, let $\text{dist} (X,\mathcal S)=\inf_{Y\in\mathcal S}\|X-Y\|_F$ denotes the distance from $X$ to $\mathcal S$. For a function $h(\cdot)$, let $\text{dom}~h=\{W:h(W)<+\infty\}$ denote its effective domain. A further notation will be introduced wherever it appears. 
    
\subsection{Preliminaries}\label{subsec22}

Define
\begin{equation}\label{eq:regularizer}
	h(V) = \mu\|V\|_{2,1} + \delta_{\mathbb{R}_+^{r\times n}}(V),
\end{equation}
where $\delta_{\mathcal C}(V)=0$ if $V\in\mathcal C$ and $+\infty$ otherwise. For a parameter $\eta>0$, the proximal mapping of $h$ is defined by 
\begin{equation}\label{proxdefinition}
	\mathcal P_{\eta\mu}(Z) =\text{prox}_{\eta\mu h}(Z)
	=\arg\min_{V\geq0} \{
	\mu\|V\|_{2,1}+\frac{1}{2\eta}\|V-Z\|_F^2 \}.
\end{equation}
  Let $A=[Z]_+$, where $[Z]_+$ denotes the entrywise nonnegative part of $Z$. The proximal mapping admits the following closed-form expression for each row
$i=1,\cdots,d$,
\begin{equation*}\label{eq:pre-prox-formula}
	(\mathcal P_{\eta\mu}(Z))_{i \cdot}
	=\begin{cases} 
	 (1-\frac{\eta\mu}{\|A_{i \cdot}\|_2} )A_{i \cdot},
		&\|A_{i \cdot}\|_2>\eta\mu,\\ 
		0,&\|A_{i \cdot}\|_2\leq\eta\mu.
	\end{cases} 
\end{equation*}

As the proximal mapping of a proper closed convex function, $P_{\eta\mu}$ is nonexpansive, for all $Z,W\in\mathbb{R}^{r\times n}$ it has
\begin{equation*}\label{eq:prox-nonexpansiveness}
 \| \mathcal P_{\eta\mu}(Z) -\mathcal P_{\eta\mu}(W) \|_F \leq \|Z-W\|_F.
\end{equation*}
However, interchanging the proximal mapping and averaging yields different results. In particular,
\begin{equation*}\label{eq:prox-averaging}
	\mathcal P_{\eta\mu}
	(\frac{1}{m}\sum_{p=1}^{m}Z_p )
	\neq \frac{1}{m}\sum_{p=1}^{m} \mathcal P_{\eta\mu}(Z_p).
\end{equation*}
\begin{example}
	In the scalar case with $\eta\mu=1$, the mapping reduces to $Q(z)=\max\{z-1,0\}$. Taking $z_1=0$ and $z_2=2$ gives \begin{equation*} Q (\frac{z_1+z_2}{2} )=0,~
\frac{Q(z_1)+Q(z_2)}{2}=\frac{1}{2}.\end{equation*}
\end{example}
 

 The following lemma provides a comparison inequality for a proximal update with an inexact gradient direction and will be used in the subsequent convergence analysis.
\begin{lemma}\label{lemma0}
	Suppose that $\varphi(\cdot)$ is continuously differentiable with an $L$-Lipschitz gradient on a convex region containing $Z,Y,W$, and let $h(\cdot)$ be proper, closed, and convex. If $Y=P_{\eta\mu}(Z-\eta D)$ for $\eta>0$ and $W\in\text{dom}~h(\cdot)$, then
	\begin{equation*}\label{ preinexactproximal}
		\begin{aligned}
			\varphi(Y)+h(Y) &\leq \varphi(W)+h(W) 	+\langle\nabla\varphi(Z)-D,Y-W\rangle\\
			&-\frac{1}{\eta}\langle Y-Z,Y-W\rangle +\frac{L}{2}\|Y-Z\|_F^2 +\frac{L}{2}\|W-Z\|_F^2.
		\end{aligned}
	\end{equation*}
\end{lemma}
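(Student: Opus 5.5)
The plan is to combine three standard inequalities: the optimality condition of the proximal step, the descent lemma for $\varphi$, and a lower bound on $\varphi(W)$ from $L$-smoothness. Here $P_{\eta\mu}$ is read as the proximal mapping of $\eta h$, so that $Y=\arg\min_V\{h(V)+\frac{1}{2\eta}\|V-(Z-\eta D)\|_F^2\}$.

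First, I will use the optimality condition of this convex problem. It gives $0\in\partial h(Y)+\frac{1}{\eta}(Y-Z+\eta D)$, so $-D-\frac{1}{\eta}(Y-Z)\in\partial h(Y)$. Since $h$ is convex and $W\in\text{dom}~h$, the subgradient inequality yields
\begin{equation*}
h(Y)\le h(W)-\langle D,Y-W\rangle-\frac{1}{\eta}\langle Y-Z,Y-W\rangle .
\end{equation*}

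Second, I will bound $\varphi(Y)$ from above using the Lipschitz gradient on the convex region containing $Z$ and $Y$:
\begin{equation*}
\varphi(Y)\le\varphi(Z)+\langle\nabla\varphi(Z),Y-Z\rangle+\frac{L}{2}\|Y-Z\|_F^2 .
\end{equation*}

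Third, I will bound $\varphi(Z)$ in terms of $\varphi(W)$. Convexity of $\varphi$ is not assumed, so I use the two-sided quadratic bound from $L$-smoothness, which holds because the segment from $W$ to $Z$ lies in the region:
\begin{equation*}
\varphi(Z)\le\varphi(W)+\langle\nabla\varphi(Z),Z-W\rangle+\frac{L}{2}\|W-Z\|_F^2 .
\end{equation*}
This inequality comes from $\varphi(W)\ge\varphi(Z)+\langle\nabla\varphi(Z),W-Z\rangle-\frac{L}{2}\|W-Z\|_F^2$. It is where the term $\frac{L}{2}\|W-Z\|_F^2$ in the statement originates.

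Finally, I will add the three inequalities. The gradient terms combine as $\langle\nabla\varphi(Z),Y-Z\rangle+\langle\nabla\varphi(Z),Z-W\rangle=\langle\nabla\varphi(Z),Y-W\rangle$. Adding the $h$-inequality then produces $\langle\nabla\varphi(Z)-D,Y-W\rangle$, together with the $-\frac{1}{\eta}\langle Y-Z,Y-W\rangle$ term and both quadratic terms, which is exactly the claimed bound.

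There is no real obstacle. The only points needing care are stating the convexity of the region so that both smoothness bounds are valid, and confirming that the prox is scaled by $\eta$ so that the $\frac{1}{\eta}$ factor is correct.
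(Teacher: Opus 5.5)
Your proof is correct and matches the paper's argument. The paper likewise adds the prox-optimality/subgradient inequality $h(Y)-h(W)\le-\langle D,Y-W\rangle-\frac{1}{\eta}\langle Y-Z,Y-W\rangle$ to the smoothness bound $\varphi(Y)-\varphi(W)\le\langle\nabla\varphi(Z),Y-W\rangle+\frac{L}{2}\|Y-Z\|_F^2+\frac{L}{2}\|W-Z\|_F^2$; you simply make explicit that this second bound comes from the upper and lower quadratic bounds at $Z$, which the paper states in a single line.
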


\begin{proof}
	Following the proximal optimality condition and the subgradient inequality, it derives \begin{equation*}
		\begin{aligned}
        h(Y)-h(W)\leq-\langle D,Y-W\rangle-\frac{1}{\eta}\langle Y-Z,Y-W\rangle.	
        \end{aligned}
	\end{equation*}	The Lipschitz continuity of $\nabla\varphi$ implies
	\begin{equation*}
		\begin{aligned}
        \varphi(Y)-\varphi(W)	\leq\langle\nabla\varphi(Z),Y-W\rangle+\frac{L}{2}\|Y-Z\|_F^2 +\frac{L}{2}\|W-Z\|_F^2.	
        \end{aligned}
	\end{equation*}
	Adding the two inequalities proves the result.
\end{proof}

	\section{A Federated Alternating Minimization Algorithm }\label{sec3}  
	
	Define the local loss functions $f_p(U_p,V)=\|X_p-U_pV\|_F^2$ and the regularization function $g(V)=\|V\|_{2,1}$. Problem~\eqref{FMF} can be rewritten as
	\begin{equation}\label{FMF1}
		\begin{aligned}
			\min_{\{U_p\},V}&\quad F(\{U_p\},V)=\frac1m\sum_{p=1}^{m}f_p(U_p,V)+\mu g(V)\\
			\text{s.t.}&~~~ U_p\geq0,~U_p^\top U_p=I,~V\geq0.
		\end{aligned}
	\end{equation}
In this section, we develop a federated alternating minimization algorithm for Problem~\eqref{FMF1}. 
Following the model averaging principle of FedAvg, each client $p$ maintains a local copy $V_p$ of the shared matrix $V$. The corresponding local problem is
 \begin{equation}\label{FMFlocal}
		\begin{aligned}
			\min_{U_p,V_p}&\quad f_p(U_p,V_p)+\mu g(V_p)\\
			\text{s.t.}&~~~ U_p\geq0,~U_p^\top U_p=I,~V_p\geq0.
		\end{aligned}
	\end{equation}
At each communication round, clients perform multiple local alternating updates of $U_p$ and $V_p$. The server then aggregates the pre-proximal variables uploaded by the clients and broadcasts the global pre-proximal variable for the next round. The local updates and server aggregation are described below.

	\subsection{Update of the local matrix ${U}_{p}$}\label{seclocal31}
        At communication round $s$, client $p$ performs $\tau$ local alternating updates, indexed by $k=0,\cdots,\tau-1$. Given the current iterates $U_p^{s,k}$ and $V_p^{s,k}$, it updates $U_p^{s,k}$ to $U_p^{s,k+1}$ by solving the following proximal subproblem,
		\begin{equation}\label{Um1}
			\begin{aligned}
				\min_{U }&\quad f_p(U,V_{p}^{s,k}) +\frac{\rho_1}{2}\| U  - U_{p}^{s,k}\|_{F}^{2} \\
				\text{s.t.}&\quad U  \ge 0,~U^{\top} U  = I,
			\end{aligned}
		\end{equation}
		where $\rho_1$ is a positive constant and $U_p^{s,0}=U_p^{s-1}$, with $U_p^{s-1}$ denoting the local matrix retained from the preceding round.
		
		Let $l(U)=f_p(U,V_{p}^{s,k})+\frac{\rho_1}{2}\|U-U_p^{s,k}\|_F^2$. Define the oblique manifold
\begin{equation*}\label{eq:oblique-manifold}
	\mathcal M_p=\{U\in\mathbb R^{d_p\times r}: \|U_{.j}\|_2=1,\ j=1,\cdots,r\},
\end{equation*}
and let $\mathcal B_p=\mathcal M_p\cap\mathbb R_+^{d_p\times r}$. Following the exact penalty framework in \cite{jiang2023exact}, it considers the penalized subproblem
		\begin{equation}\label{penalizedU}
			\min_{U\in\mathcal B_p} P_\sigma(U)=l(U)+\sigma(\|U\mathbf a\|_2^2-1),
		\end{equation}
		where $\sigma>0$ is the penalty parameter, and $\mathbf a=\mathbf e/\sqrt r$ with $\mathbf e\in\mathbb R^r$ denoting the all ones vector.
		
		For $U\in\mathcal B_p$, the penalty term is nonnegative and vanishes if and only if $U^\top U=I$. To characterize stationarity for \eqref{penalizedU}, it equips $\mathcal M_p$ with the metric induced by the Euclidean inner product $\langle D_1,D_2\rangle=\operatorname{tr}(D_1^\top D_2)$. The tangent space at $U\in\mathcal M_p$ is
\begin{equation*}\label{eq:oblique-tangent}
	\mathcal T_{\mathcal M_p}(U)=\{D\in\mathbb R^{d_p\times r}\mid (U)_{\cdot j}^{\top}D_{\cdot j}=0,\ j=1,\cdots,r\}.
\end{equation*}

The Riemannian gradient of $P_\sigma$ is the orthogonal projection of its Euclidean gradient onto $\mathcal T_{\mathcal M_p}(U)$. Following \cite[Eq.~(2.5)]{jiang2023exact}, it is given by \begin{equation*}
			\begin{aligned}
				\text{grad}~P_{\sigma}(U)=\nabla P_{\sigma}(U)-U \text{diag}~(U^{\top}\nabla P_{\sigma}(U))
			\end{aligned}
		\end{equation*}
       The nonnegativity constraint is imposed separately. By \cite[Theorem~4.1]{jiang2023exact}, any local minimizer $U^*$ of \eqref{penalizedU} satisfies \begin{equation*}
    \min \{ U^*,\text{grad}~P_\sigma(U^*) \}=0,
\end{equation*}
where the minimum is taken entrywise.

    This necessary condition motivates the stationarity stopping criterion
\begin{equation*}
			\|\min\{U,\text{grad}~P_\sigma(U)\}\|_F\leq\varepsilon_{1},
		\end{equation*}
		where $\varepsilon_{ 1}>0$ is the tolerance. Algorithm~\ref{PEP} uses this criterion in the numerical solution of the penalized subproblem. After $\tau$ local alternating updates, it denotes the resulting local matrix by $U_p^s=U_p^{s,\tau}$.
        
   \begin{algorithm}[h]
			\caption{Optimization algorithm for solving subproblem~\eqref{Um1}}
			\label{PEP}
			\KwIn{ Matrices $U_p^{s,k}$, $V_p^{s,k}$, initial penalty parameter $\sigma_0>0$, penalty factor $\gamma>1$, tolerances $\varepsilon_{1},\epsilon_2>0$, maximum penalty iterations $T_{a}$
			}
			\KwOut{$U_p^{s,k+1}$}
			
			Set $Y^0=U_p^{s,k}$ 
			
			\For{$t=0,1,\cdots,T_{a}-1$}{
				
				Starting from $Y^t$, use a projected gradient method to find $Y^{t+1}\in\mathcal B_p$ satisfying
				\begin{equation*}
					\| \min \{ Y^{t+1}, \text{grad}~P_{\sigma_t}(Y^{t+1})\} \|_F \leq\varepsilon_{1}  
				\end{equation*}
				and
				\begin{equation*}
					P_{\sigma_t}(Y^{t+1})\leq P_{\sigma_t}(U_p^{s,k}) 
				\end{equation*}
				
				\If{
					$ \|Y^{t+1}\mathbf a\|_2^2-1  \leq\epsilon_2$
				}{
					Set $U_p^{s,k+1}=Y^{t+1}$ 
			
				}
				Set $\sigma_{t+1}=\gamma\sigma_t$ 
                
				Set $Y^{t+1}$ as the initial point for the next penalty iteration 
			}
			Set $U_p^{s,k+1}=Y^{T_{a}}$ 
			
			\Return{$U_p^{s,k+1}$}
		\end{algorithm}   
		
		\begin{remark}
		Under the assumptions and penalty parameter conditions of Theorem 3.1 of \cite{jiang2023exact}, a global minimizer of the penalized problem can be used to recover a global minimizer of \eqref{Um1}. In the convergence analysis, $U_p^{s,k+1}$ denotes the recovered global minimizer.
		\end{remark}

	\subsection	{Update of the local copy ${V}_{p}$ }\label{seclocal32}
			At local iteration $k$ of communication round $s$, client $p$ first obtains $U_p^{s,k+1}$ and then updates the local copy from $V_p^{s,k}$ to $V_p^{s,k+1}$. Specifically, the local update solves
		\begin{equation}\label{Vm}
			\begin{aligned}
				\min_{V_{p}}  &\quad f_p(U_{p}^{s,k+1},V_{p})+ \mu g(V_p) \\
				\text{s.t.}  &\quad V_{p} \ge 0.
			\end{aligned}
		\end{equation}
        
        Solving Problem \eqref{Vm} in the federated setting presents two challenges. On the one hand, the proximal mapping associated with the nonsmooth regularizer and the nonnegativity constraint does not commute with averaging, complicating the design of local updates and server aggregation. On the other hand, multiple local updates between communication rounds can cause client drift. The following two subsections address these challenges through variable splitting and gradient correction, respectively.
	\subsubsection{Variable splitting}
		We use variable splitting to separate gradient accumulation from the proximal mapping. For a fixed round $s$, it omits the round index and write the local iterates as $U_p^k$, $\widehat V_p^k$, and $V_p^k$. Here, the pre-proximal variable $\widehat V_p^k$ accumulates smooth gradient updates, and the post-proximal variable $V_p^k$ is obtained by applying the proximal mapping.
        
At local iteration $k$, the gradient with respect to $V_p$ is
		\begin{equation*}\label{local_smooth_gradient}
			\begin{aligned}
				\nabla_{V_{p}} f_p(U_p^{k+1},V_p^{k})=2(U_p^{k+1})^\top(U_p^{k+1}V_p^{k}-X_p).
			\end{aligned}
		\end{equation*}
		The pre-proximal variable and the post-proximal variable are updated as follows
		\begin{equation*}\label{uncorrected_updates}
			\begin{aligned}
			 \widehat V_p^{k+1} &= \widehat V_p^{k}-\alpha\nabla_{V_p} f_p(U_p^{k+1},V_p^{k}),\\
			V_p^{k+1} &= \mathcal P_{(k+1)\alpha\mu}(\widehat V_p^{k+1}),
			\end{aligned}
		\end{equation*}
		where $\alpha>0$ is the local step size, and $\mathcal P_{\eta\mu}$ with $\eta=(k+1)\alpha$ is defined in \eqref{proxdefinition}. Both variables are initialized as $\widehat V_p^0=V_p^0=V^s=\mathcal P_{\tilde\eta\mu}(\overline V^s)$, where $\overline V^s$ is the global pre-proximal variable broadcast by the server at the beginning of round $s$ and $\tilde\eta=\eta_g\alpha\tau$ is the effective global step size, with $\eta_g>0$ denoting the server step size. 
        \begin{remark}
        At local iteration $k$, the proximal parameter is $\eta=(k+1)\alpha$. The pre-proximal variable accumulates gradient updates without incorporating the results of the proximal mappings. Thus, each post-proximal iterate is obtained by applying the proximal mapping to the initial point of the round adjusted by the accumulated gradients. As explained by Zhang et al.~\cite{zhang2026nonconvex}, the proximal step size should match the cumulative step size of these gradient updates, which is $(k+1)\alpha$ after $k+1$ local updates.
        \end{remark}
			This variable splitting strategy separates gradient accumulation from the proximal mapping. The post-proximal variable incorporates regularization and nonnegativity and is used in subsequent local updates. The pre-proximal variable retains the accumulated update directions and is uploaded for server aggregation. Thus, the algorithm avoids interchanging proximal mapping and averaging. It next introduces a correction term into the pre-proximal update to account for client drift.

		\subsubsection{Correction for client drift}
		During local alternating updates, both the local matrices and local copies evolve. Their changes can produce different gradient directions across clients, particularly when multiple local iterations precede server aggregation.

Existing federated methods address client drift through different mechanisms. Li et al. \cite{li2020federated} introduce a quadratic proximal term to limit deviations from the current global model. Karimireddy et al. \cite{karimireddy2020scaffold} use client and server control variates to correct local gradient directions. Acar et al. \cite{acar2021federated} dynamically regularize local objectives to align local solutions with the global objective in the limit. For composite federated optimization, Zhang et al. \cite{zhang2026nonconvex} construct a correction from the difference between the global and local average gradients computed in the preceding round. Their mechanism recovers aggregate gradient information from the broadcast pre-proximal variable without separately communicating correction matrices.

We adopt the gradient correction mechanism proposed by \cite{zhang2026nonconvex}. At the beginning of communication round $s\geq2$, client $p$ computes the correction using the newly broadcast variable $\overline V^s$, the stored shared variable $V^{s-1}$, and the local gradients evaluated during round $s-1$,
\begin{equation}\label{cor}
	C_p^s 	= \frac{V^{s-1}-\overline {V}^s}{\tilde\eta} 	- \frac{1}{\tau}\sum_{k=0}^{\tau-1}
	\nabla_{V_p^{s-1}} f_p(U_p^{s-1,k+1},V_p^{s-1,k}),
\end{equation}
where $C_p^1=0$ and the correction remains fixed throughout round $s$. 

For $k=0,\cdots,\tau-1,$ incorporating \eqref{cor} into the variable splitting scheme yields
\begin{subequations}\label{corrected_updates}
	\begin{align}
		\widehat V_p^{s,k+1} &= \widehat V_p^{s,k} -\alpha (
		\nabla_{V_p} f_p(U_p^{s,k+1},V_p^{s,k})
		+C_p^s ), \label{correctedpreprox}\\
		V_p^{s,k+1} &= 	\mathcal P_{(k+1)\alpha\mu} 	(\widehat V_p^{s,k+1}). 	\label{correctedprox}
	\end{align}
\end{subequations}
After completing these updates, client $p$ uploads $\widehat V_p^{s,\tau}$ to the server.

To clarify the correction mechanism, define the local average gradient as
\begin{equation*}\label{local_average_gradient}
	G_p^s 	= \frac{1}{\tau}\sum_{k=0}^{\tau-1}
	\nabla_{V_p^{s}} f_p(U_p^{s,k+1},V_p^{s,k}).
\end{equation*}
Accumulating \eqref{correctedpreprox} over $\tau$ local updates gives
\begin{equation}\label{accumulated_preprox}
	\widehat V_p^{s,\tau} = V^{s}-\alpha\tau(G_p^{s}+C_p^{s}).
\end{equation}
Substituting this identity into the server update in Section~\ref{secglobal33} and using $\tilde\eta=\eta_g\alpha\tau$ yields
\begin{equation}\label{agggrad}
	\frac{V^{s}-\overline {V}^{s+1}}{\tilde\eta} = \frac{1}{m}\sum_{p=1}^{m} (G_p^{s}+C_p^{s}).
\end{equation}
Since $C_p^1=0$, the zero-sum property $\sum_{p=1}^{m}C_p^s=0$ follows by induction. Indeed, if it holds at round $s$, then \eqref{cor} and \eqref{agggrad} imply
\begin{equation}\label{cordiff}
    C_p^{s+1}=\frac{1}{m}\sum_{q=1}^{m}G_q^s-G_p^s.
\end{equation}
Summing over all clients gives $\sum_{p=1}^{m}C_p^{s+1}=0$, completing the induction. Equation~\eqref{cordiff} shows that the correction for round $s+1$ is the difference between the global and local average gradients computed in round $s$. Each client recovers this difference from the server feedback and its stored local average gradient, without separately communicating correction matrices.

	\subsection{ Update of the shared matrix $V$}\label{secglobal33}
	At the end of communication round $s$, the server receives the local pre-proximal variables $\widehat V_p^{s,\tau}$ from all clients. Using their average and the current shared variable $V^s=\mathcal P_{\tilde\eta\mu}(\overline V^s)$, it computes the updated global pre-proximal variable as
	\begin{equation}\label{barV}
		\overline {V}^{s+1}=V^s+\eta_g(\frac{1}{m}\sum_{p=1}^{m}\widehat V_p^{s,\tau}- {V}^s)),
	\end{equation}
	where $\eta_g>0$ is the server step size. The updated pre-proximal variable $\overline {V}^{s+1}$ is then broadcast to all clients for the next communication round. After completing $T$ communication rounds, the final shared matrix is given by
	\begin{equation*}\label{final_global_output}
		V^{T+1}=\mathcal{P}_{\tilde{\eta}\mu} (\overline {V}^{T+1}).
	\end{equation*}
	In summary, Algorithm~\ref{al1} provides the full implementation details of our algorithm from the perspective of a single client, while Fig.~\ref{alblock} illustrates the local updates and communication between the client and the server.

	\begin{algorithm}[t]
		\caption{Federated alternating minimization algorithm for solving \eqref{FMF1}}\label{al1}
		\KwIn{Matrix $ X_p\in\mathbb R_+^{d_p\times n}$, parameters $\alpha,\eta_g,\mu,\rho_1>0,$ the maximum communication round $T$, the maximum number of iterations $\tau$, with $\tilde{\eta}=\eta_g\alpha\tau$ }
		
		\textbf{Initialize} $\overline V^1\geq 0$, $V^1=\mathcal P_{\tilde\eta\mu}(\overline V^1)$, $C_p^1=0$, and $U_p^{1,0}\geq 0$
		
		\For{$s = 1,2,\dots, T$}{
        
			\textbf{Client $p$}
			
				 Receive $\overline {V}^{s}$ from the server
		 
			Update correction matrix $C_p^{s}$ by \eqref{cor}
			
				Initialize $\widehat V_p^{s,0}=V_p^{s,0}=\mathcal P_{\tilde\eta\mu}(\overline {V}^{s})$ and $U_p^{s,0}=U_p^{s-1,\tau}$
				
				\For{$k=0,1,\cdots,\tau-1$}{
					
					Update local basis matrix $U_p^{s,k+1}$ by Algorithm \ref{PEP} 
                    
					Update pre-proximal matrix $\widehat V_p^{s,k+1}$ by \eqref{correctedpreprox}
                    
					Update post-proximal matrix $V_p^{s,k+1}$ by \eqref{correctedprox}
				}
			
			Upload $\{\widehat V_p^{s,\tau}\}$ to the server
			
			\textbf{Server}

			Updates $\overline {V}^{s+1}$ by \eqref{barV}
			
			Broadcasts $\overline {V}^{s+1}$ to all the clients 
			
		}
		\KwOut{The shared matrix $V^{T+1}=\mathcal{P}_{\tilde{\eta}\mu}(\overline {V}^{T+1})$}
		
	\end{algorithm}

\begin{figure}[htbp]
\centering
\begin{tikzpicture}[
    x=1cm,
    y=1cm,
    >=Stealth,
    flow/.style={
        ->,
        draw=blue,
        line width=1.2pt
    },
    block/.style={
        draw=black,
        dashed,
        line width=1.1pt,
        rounded corners=9pt
    },
    highlight/.style={
        draw=blue,
        line width=1.1pt,
        rounded corners=3pt,
        inner sep=4pt
    },
    labeltext/.style={
        font=\sffamily
    }
]

\draw[block] (1.7,0) rectangle (5.5,3.4);

\draw[block] (7.15,0) rectangle (9.45,3.4);

\draw[flow] (0.15,1.5) -- (1.7,1.5);
\draw[flow] (5.5,1.5) -- (7.15,1.5);
\draw[flow] (9.45,1.5) -- (11.3,1.5);

\node[labeltext,anchor=west] at (0.1,2.6) {Receive};
\node[anchor=west] at (0.1,2.1)
    {$\overline V^s$};
    
\node at (3.08,2.85)
    {$U_p^{s,1}$};
\node at (4.00,2.85)
    {$\cdots$};
\node at (4.83,2.85)
    {$U_p^{s,\tau}$};
    
\node at (2.15,1.82)
    {$C_p^s$};
\node at (3.08,1.82)
    {$\widehat V_p^{s,1}$};
\node at (4.00,1.82)
    {$\cdots$};
\node[highlight] at (4.83,1.82)
    {$\widehat V_p^{s,\tau}$};

\node at (3.08,0.58)
    {$V_p^{s,1}$};
\node at (4.00,0.58)
    {$\cdots$};
\node at (4.83,0.58)
    {$V_p^{s,\tau}$};

\node[labeltext] at (6.25,2.65) {Send};
\node at (6.32,2.05)
    {$\widehat V_p^{s,\tau}$};

\node[highlight] at (8.3,1.82)
    {$\overline {V}^{s+1}$};
\node at (8.3,0.58)
    {$\mathcal{P}_{\tilde{\eta}\mu} (\overline {V}^{s+1})$};

\node[labeltext,anchor=west] at (9.65,2.6)
    {Broadcast};
\node[anchor=west] at (9.75,2.05)
    {$\overline {V}^{s+1}$};

\node[labeltext] at (3.6,-0.5) {Client $p$};
\node[labeltext] at (8.3,-0.5) {Server};

\end{tikzpicture}
\caption{Block diagram of Algorithm~\ref{al1}.}
\label{alblock}
\end{figure}
    
	\subsection{Convergence Analysis}\label{subsec32}
	
	 It establishes the convergence results under the following assumptions.
	\begin{assumption}\label{assumption1}
	    There exist constants $L_{1,p}>0$, $p=1,\ldots,m$, such that, for any feasible $U_p$ and any $V_1,V_2$ generated by the algorithm, \begin{equation*}
		\begin{aligned} 
        \|\nabla_{U_p}f_p(U_p,V_1)-\nabla_{U_p}f_p(U_p,V_2)\|_F\leq L_{1,p}\|V_1-V_2\|_F.
        \end{aligned}
	\end{equation*}
	\end{assumption}
\begin{assumption}\label{assumption2}
	  The proximal basis subproblem~\eqref{Um1} and its penalty formulation satisfy the conditions of Theorem~3.1 in \cite{jiang2023exact} at every local update.
	\end{assumption}
    Under Assumption \ref{assumption2}, Theorem~3.1 and Lemma~2.3 of \cite{jiang2023exact} imply that $U_p^{s,k+1}$ is a global minimizer of the original proximal basis subproblem. Consequently, the basis update satisfies both the sufficient descent inequality and the first-order stationarity condition.
    \begin{assumption}\label{assumption3}
    There exists a constant $c_h>0$ such that $\|(X-Y)/\tilde\eta\|_F\leq c_h$ for every proximal evaluation $Y=\mathcal P_{\tilde\eta\mu}(X)$ generated by Algorithm~\ref{al1}.
	\end{assumption}
	
	The following Lipschitz bounds follow directly from the structure of the objective function. For the function $f_p(U_p,V)=\|X_p-U_pV\|_F^2$, the orthogonality constraint $U_p^\top U_p=I$ implies $ \nabla_V f_p(U_p,V)=2V-2U_p^\top X_p.$ For any feasible $U_p$ and any $V_1,V_2$, it holds
	\begin{equation*}
		\|\nabla_V f_p(U_p,V_1)-\nabla_V f_p(U_p,V_2)\|_F
		\leq 2\|V_1-V_2\|_F.
	\end{equation*}
	Moreover, for any feasible $U_{p,1},U_{p,2}$ and any $V$,
	\begin{equation*}
		\begin{aligned}
			&\quad\|\nabla_V f_p(U_{p,1},V) 	-\nabla_V f_p(U_{p,2},V)\|_F \\
            &= 2\|(U_{p,1}-U_{p,2})^\top X_p\|_F \leq 2\|X_p\|_F\|U_{p,1}-U_{p,2}\|_F.
		\end{aligned}
	\end{equation*}
	Thus, on the feasible set, $\nabla_V f_p$ is Lipschitz continuous in $V$ and $U_p$ with constants $L_2=2$ and $L_{3,p}=2\|X_p\|_F$, respectively.

    For the analysis, it defines the virtual iterate $\widetilde{V}^{s+1}$, 
	\begin{equation}\label{virtual}
		\begin{aligned} 
			\widetilde{V}^{s+1} = \mathcal{P}_{\tilde{\eta}\mu} ( V^{s} - \tilde{\eta} \nabla_{V_p} f(\{ U_p^{s,0}\},V^s) ),
		\end{aligned}
	\end{equation}
	where $f(\{U_p\},V) =\frac{1}{m}\sum_{p=1}^{m}f_p(U_p,V)$. This virtual iterate is used only in the analysis and does not require additional computation or communication. 
    
		It first establishes a bound on the drift error between the local copy iterates $V_p^{s,k}$ and their common initial point $V^s$.
	\begin{lemma}\label{LeA.12}
		If the parameter $\tilde\eta$ satisfies $0<\tilde\eta\leq\frac{\eta_g}{\sqrt{20}L_2},$ then, for every $s\geq1$, it holds that
		\begin{equation*}\label{local_drift_bound}
			\begin{aligned}
				\sum_{k=0}^{\tau-1}\sum_{p=1}^{m}\|V_p^{s,k}-V^s\|_F^2	&\le 6\tau\|\Lambda^s-\overline{\Lambda}^s\|_F^2+24m\tau^3\alpha^2c_h^2\\
				&+6m\tau^3\alpha^2\|\widetilde{\nabla}h(\widetilde V^{s+1})
				+\nabla_V f(\{U_p^{s,0}\},V^s)\|_F^2\\
				&+12\alpha^2\tau^4\sum_{p=1}^{m}\sum_{k=0}^{\tau-1}L_{3,p}^2\|\Delta U_p^{s,k}\|_F^2,
			\end{aligned}
		\end{equation*}
		where $\Lambda_{a}^s=\frac1m\sum_{p=1}^{m}\Lambda_p^s,~\Lambda^s=\text{col}~\{\Lambda_1^s,\cdots,\Lambda_m^s\},~\overline{\Lambda}^s=\text{col}~\{\Lambda_{a}^s,\cdots,
		\Lambda_{a}^s\},~\widetilde{\nabla}h(\widetilde V^{s+1})\in\partial h(\widetilde V^{s+1}),~ \Delta U_p^{s,k} =U_p^{s,k+1}-U_p^{s,k},$ and
       $ \Lambda_p^s =\alpha\tau ( \nabla_V f_p(U_p^{s,0},V^s) +\frac{1}{m}\sum_{p=1}^{m}G_p^{s-1} -G_p^{s-1} ).$

		
	\end{lemma}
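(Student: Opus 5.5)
Our plan is to write the local iterate as a proximal evaluation and compare it with $V^s$, which is itself a proximal point, using the residual bound of Assumption~\ref{assumption3}. By \eqref{correctedpreprox}, for $k\ge1$ we have $\widehat V_p^{s,k}=V^s-\alpha\sum_{j=0}^{k-1}\bigl(\nabla_V f_p(U_p^{s,j+1},V_p^{s,j})+C_p^s\bigr)$ and $V_p^{s,k}=\mathcal P_{k\alpha\mu}(\widehat V_p^{s,k})$. We write $V_p^{s,k}-V^s=(V_p^{s,k}-\widehat V_p^{s,k})+(\widehat V_p^{s,k}-V^s)$.

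\textbf{Step 1: proximal residual.} The first piece is controlled by Assumption~\ref{assumption3}. Applied with the cumulative step $k\alpha$ (or by rescaling the nonexpansive prox bound), it gives $\|V_p^{s,k}-\widehat V_p^{s,k}\|_F\le k\alpha c_h\le \tau\alpha c_h$. This is the source of the term $24m\tau^3\alpha^2c_h^2$ after summing $\tau$ values of $k$ and $m$ clients with a constant $4$ from Young's inequality. We also need the decomposition $V^s=\mathcal P_{\tilde\eta\mu}(\overline V^s)$ to avoid an extra $\overline V^s$ term.

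\textbf{Step 2: decompose the accumulated direction.} Using \eqref{cordiff}, we have $C_p^s=\frac1m\sum_q G_q^{s-1}-G_p^{s-1}$. Then $\alpha\bigl(\nabla_Vf_p(U_p^{s,j+1},V_p^{s,j})+C_p^s\bigr)$ splits as
\[
\tfrac1\tau\Lambda_p^s+\alpha\bigl(\nabla_Vf_p(U_p^{s,j+1},V_p^{s,j})-\nabla_Vf_p(U_p^{s,0},V^s)\bigr).
\]
We further write $\Lambda_p^s=(\Lambda_p^s-\Lambda_a^s)+\Lambda_a^s$. Because $\sum_pC_p^s=0$, we have $\Lambda_a^s=\alpha\tau\nabla_Vf(\{U_p^{s,0}\},V^s)$. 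We then add and subtract $\alpha\tau\widetilde\nabla h(\widetilde V^{s+1})$. Since $\widetilde V^{s+1}$ is the proximal point in \eqref{virtual}, $\widetilde\nabla h(\widetilde V^{s+1})=(V^s-\widetilde V^{s+1})/\tilde\eta-\nabla_Vf$. Its norm is at most $c_h$ by Assumption~\ref{assumption3}, which contributes a further $c_h^2$ piece. This yields the terms $\|\Lambda^s-\overline\Lambda^s\|_F^2$ and $\|\widetilde\nabla h+\nabla_Vf\|_F^2$, each weighted by $\tau^2\alpha^2$ per $k$ up to constants. The constant $6$ comes from Cauchy--Schwarz over roughly six summands.

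\textbf{Step 3: the recursive drift term.} We bound the gradient difference by the Lipschitz constants $L_2$ and $L_{3,p}$:
\[
\|\nabla_Vf_p(U_p^{s,j+1},V_p^{s,j})-\nabla_Vf_p(U_p^{s,0},V^s)\|_F\le L_2\|V_p^{s,j}-V^s\|_F+L_{3,p}\sum_{i\le j}\|\Delta U_p^{s,i}\|_F.
\]
Squaring and applying Cauchy--Schwarz over $j<k\le\tau$ gives $\alpha^2\tau^2L_{3,p}^2\tau\sum_i\|\Delta U_p^{s,i}\|^2$. This produces the $12\alpha^2\tau^4$ term after summing over $k$. It also gives a self-referential term $c\,\alpha^2\tau^2L_2^2\sum_{k,p}\|V_p^{s,k}-V^s\|_F^2$.

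\textbf{Main obstacle.} The key difficulty is absorbing this self-referential term into the left-hand side. Since $\alpha\tau=\tilde\eta/\eta_g$, the hypothesis $\tilde\eta\le\eta_g/(\sqrt{20}L_2)$ gives $\alpha^2\tau^2L_2^2\le 1/20$. With Young constants chosen so the coefficient is at most $1/2$ (e.g.\ factor $10$ in front), we move it to the left and multiply by $2$, which turns factors $3,12,6$ into $6,24,12$. The delicate part is tracking the constants so the final coefficients match exactly. If they do not, we would adjust the weights in the Young splits, since only the absorption coefficient being $\le 1/2$ is essential.
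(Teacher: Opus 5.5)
Your argument is correct, but it follows a different route from the paper's proof.

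\textbf{How the paper proceeds.} It does not split through the pre-proximal variable. Instead it compares $V_p^{s,k+1}$ with the auxiliary exact proximal-gradient point $\widetilde V_{pgd}=\mathcal P_{(k+1)\alpha\mu}(V^s-(k+1)\alpha H^s)$, where $H^s=\nabla_V f(\{U_p^{s,0}\},V^s)$. Nonexpansiveness reduces $\|V_p^{s,k+1}-\widetilde V_{pgd}\|_F$ to a difference of prox arguments, which is where $\Lambda_p^s-\Lambda_a^s$ and the gradient drift enter. The optimality conditions of $\widetilde V_{pgd}$ and $\widetilde V^{s+1}$ then write $\widetilde V_{pgd}-V^s$ as $(k+1)\alpha$ times a difference of two subgradients plus $(k+1)\alpha(\widetilde\nabla h(\widetilde V^{s+1})+H^s)$. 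The proof closes with a discrete Gr\"onwall recursion on the partial sums $\sum_{l\le k}\|V_p^{s,l}-V^s\|_F^2$. Its amplification factor $(1+8\tau\alpha^2L_2^2)^\tau\le e^{2/5}\approx1.492$ is what produces the constants $6,24,6,12$.

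\textbf{What your route buys.} The identity $V_p^{s,k}-V^s=(V_p^{s,k}-\widehat V_p^{s,k})+(\widehat V_p^{s,k}-V^s)$ needs neither the auxiliary point nor nonexpansiveness. A single absorption on the finite double sum replaces the recursion. This is more elementary and gives smaller constants. Let $D=\sum_{k=0}^{\tau-1}\sum_{p=1}^m\|V_p^{s,k}-V^s\|_F^2$. Group $V_p^{s,k}-\widehat V_p^{s,k}$ with $k\alpha\widetilde\nabla h(\widetilde V^{s+1})$, which has norm at most $2k\alpha c_h$. Then use $\|a+b+c+d\|_F^2\le 4(\|a\|_F^2+\|b\|_F^2+\|c\|_F^2+\|d\|_F^2)$, $\sum_{k=1}^{\tau-1}k^2\le\tau^3/3$ and $\sum_{k=1}^{\tau-1}k\le\tau^2/2$. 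The self-referential term becomes at most $4\alpha^2\tau^2L_2^2D\le D/5$. You obtain coefficients $\tfrac53\tau$, $\tfrac{20}{3}$, $\tfrac53$, $\tfrac{10}{3}$ in place of $6\tau$, $24$, $6$, $12$.

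Two small corrections:
\begin{itemize}
\item The $\Lambda$ piece is weighted by $k^2/\tau^2$ per $k$, not $\tau^2\alpha^2$, since $\Lambda_p^s$ already carries the factor $\alpha\tau$.
\item Cruder sums such as $\sum_k k^2\le\tau^3$ can push the $\Lambda$ coefficient past $6\tau$, so keep the sharp sums.
\end{itemize}

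\textbf{What the paper's route buys.} It treats each local iterate as an inexact proximal-gradient step from $V^s$. The actual local prox residuals $\widehat V_p^{s,k}-V_p^{s,k}$ therefore never appear, only prox arguments and the exact comparison point. The price is the auxiliary point and the recursion.

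\textbf{A caveat the paper shares.} Assumption~3 literally covers only evaluations $Y=\mathcal P_{\tilde\eta\mu}(X)$ generated by the algorithm.
\begin{itemize}
\item Your Step~1 applies it to local evaluations $\mathcal P_{k\alpha\mu}(\widehat V_p^{s,k})$ with step $k\alpha$, and Step~2 applies it to the virtual point $\widetilde V^{s+1}$.
\item The paper likewise applies it to the virtual $\widetilde V_{pgd}$ with step $(k+1)\alpha$, and to $\widetilde V^{s+1}$.
\end{itemize}
You should state explicitly that you read the assumption as bounding the subgradient $(X-Y)/\eta$ at these evaluations. Your fallback of \emph{rescaling the nonexpansive prox bound} does not supply this. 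Nonexpansiveness controls differences of prox outputs, not the residual $X-\mathcal P_{\eta\mu}(X)$.
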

    With the bound in Lemma~\ref{LeA.12} at hand, it next establishes a recursion for $\|\Lambda^s-\overline{\Lambda}^s\|_F^2$.
	\begin{lemma}\label{LeA.21}
		For every $s=1,\cdots,T$, it holds that
		\begin{equation*}\label{Lambda_recursion_compact}
			\begin{aligned}
				&\frac{1}{m}\|\Lambda^{s+1}-\overline{\Lambda}^{s+1}\|_F^2-	2\alpha^2\tau^2L_2^2	\|V^{s+1}-V^s\|_F^2\\
				&\leq\frac{4\alpha^2\tau L_2^2}{m}\sum_{p=1}^{m}\sum_{k=0}^{\tau-1}\|V_p^{s,k}-V^s\|_F^2+\frac{2\alpha^2\tau^2(\tau-1)}{m}\sum_{p=1}^{m}\sum_{k=0}^{\tau-1}L_{3,p}^2\|\Delta U_p^{s,k}\|_F^2.
			\end{aligned}
		\end{equation*}
	\end{lemma}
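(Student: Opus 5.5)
The plan is to write the consensus deviation of $\Lambda^{s+1}$ as an average of per-client gradient mismatches, and then split each mismatch into a part coming from the movement of the shared matrix, a part from the local drift of $V_p^{s,k}$, and a part from the drift of $U_p^{s,k}$. The Lipschitz constants $L_2$ and $L_{3,p}$ established before Lemma~\ref{LeA.12} handle each part.

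\textbf{Step 1 (reduce to per-client mismatch).} Let $\Lambda_a^{s+1}=\frac1m\sum_q\Lambda_q^{s+1}$. By definition, $\Lambda_p^{s+1}=\alpha\tau\big(\nabla_V f_p(U_p^{s+1,0},V^{s+1})+\frac1m\sum_q G_q^s-G_p^s\big)$. The term $\frac1m\sum_q G_q^s$ is common to all clients, so it cancels in $\Lambda_p^{s+1}-\Lambda_a^{s+1}$. Set $E_p=\nabla_V f_p(U_p^{s+1,0},V^{s+1})-G_p^s$. Then $\Lambda_p^{s+1}-\Lambda_a^{s+1}=\alpha\tau\,(E_p-\frac1m\sum_q E_q)$. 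Since the sum of squared deviations from the mean is at most the sum of squares, we get
\[
\|\Lambda^{s+1}-\overline\Lambda^{s+1}\|_F^2\le \alpha^2\tau^2\sum_{p=1}^m\|E_p\|_F^2 .
\]

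\textbf{Step 2 (decompose $E_p$).} We have $U_p^{s+1,0}=U_p^{s,\tau}$ and $G_p^s=\frac1\tau\sum_{k=0}^{\tau-1}\nabla_V f_p(U_p^{s,k+1},V_p^{s,k})$. Hence $E_p=a_p+b_p+c_p$, where
\[
a_p=\nabla_V f_p(U_p^{s,\tau},V^{s+1})-\nabla_V f_p(U_p^{s,\tau},V^{s}),
\qquad
b_p=\tfrac1\tau\textstyle\sum_k\big[\nabla_V f_p(U_p^{s,k+1},V^{s})-\nabla_V f_p(U_p^{s,k+1},V_p^{s,k})\big],
\]
and $c_p=\frac1\tau\sum_k\big[\nabla_V f_p(U_p^{s,\tau},V^s)-\nabla_V f_p(U_p^{s,k+1},V^s)\big]$.

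We first use $\|a+(b+c)\|^2\le 2\|a\|^2+2\|b+c\|^2$, then $\|b+c\|^2\le 2\|b\|^2+2\|c\|^2$. This gives $\|E_p\|_F^2\le 2\|a_p\|_F^2+4\|b_p\|_F^2+4\|c_p\|_F^2$. The three pieces are bounded as follows.
\begin{itemize}
\item By $L_2$-Lipschitzness in $V$, $\|a_p\|_F^2\le L_2^2\|V^{s+1}-V^s\|_F^2$. Summing over $p$ and dividing by $m$ yields the term $2\alpha^2\tau^2L_2^2\|V^{s+1}-V^s\|_F^2$ that is moved to the left-hand side of the claim.
\item By Jensen's inequality, $\|b_p\|_F^2\le \frac{L_2^2}{\tau}\sum_k\|V_p^{s,k}-V^s\|_F^2$. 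After multiplying by $4\alpha^2\tau^2/m$, this gives exactly $\frac{4\alpha^2\tau L_2^2}{m}\sum_p\sum_k\|V_p^{s,k}-V^s\|_F^2$.
\end{itemize}

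\textbf{Step 3 (the basis drift term; main obstacle).} The delicate point is getting the constant $2\alpha^2\tau^2(\tau-1)$ rather than $4\alpha^2\tau^2(\tau-1)$. A crude bound $\|c_p\|\le L_{3,p}\sum_j\|\Delta U_p^{s,j}\|_F$ followed by Cauchy--Schwarz loses a factor of $2$.

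Instead, telescope $U_p^{s,\tau}-U_p^{s,k+1}=\sum_{j=k+1}^{\tau-1}\Delta U_p^{s,j}$ and use $L_{3,p}$-Lipschitzness in $U_p$. Interchanging the sums over $k$ and $j$, each $\|\Delta U_p^{s,j}\|_F$ picks up weight $j/\tau$:
\[
\|c_p\|_F\le L_{3,p}\sum_{j=1}^{\tau-1}\tfrac{j}{\tau}\|\Delta U_p^{s,j}\|_F .
\]
Cauchy--Schwarz then gives
\[
\|c_p\|_F^2\le L_{3,p}^2\Big(\sum_{j=1}^{\tau-1}\tfrac{j^2}{\tau^2}\Big)\sum_j\|\Delta U_p^{s,j}\|_F^2
= L_{3,p}^2\,\tfrac{(\tau-1)(2\tau-1)}{6\tau}\sum_j\|\Delta U_p^{s,j}\|_F^2 .
\]
Since $\frac{(\tau-1)(2\tau-1)}{6\tau}\le\frac{\tau-1}{3}$, we obtain $4\|c_p\|_F^2\le \frac43(\tau-1)L_{3,p}^2\sum_k\|\Delta U_p^{s,k}\|_F^2\le 2(\tau-1)L_{3,p}^2\sum_k\|\Delta U_p^{s,k}\|_F^2$. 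Here we also enlarge the index range to $k=0,\dots,\tau-1$.

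\textbf{Conclusion.} Multiplying by $\alpha^2\tau^2/m$ and summing over $p$ gives the last term of the claim. Combining the three estimates with Step 1 and dividing by $m$ yields the stated recursion. If the weighted telescoping in Step 3 turned out to be unavailable, the fallback would be the cruder constant $4$, and the stated coefficient would then need adjusting.
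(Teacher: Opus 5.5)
Your proof is correct and follows the paper's argument: you use the same variance bound, the same three-way split into shared-matrix movement, local $V$ drift and basis drift with weights $2,4,4$, and the same Lipschitz constants $L_2$ and $L_{3,p}$. The only difference is the order of operations: the paper first applies $\|\sum_{k}Y_k\|_F^2\le\tau\sum_k\|Y_k\|_F^2$ and then splits each summand, so the basis term becomes $\sum_{k}\|U_p^{s,\tau}-U_p^{s,k+1}\|_F^2\le\frac{\tau(\tau-1)}{2}\sum_{l}\|\Delta U_p^{s,l}\|_F^2$ and the constant $2$ comes out with no extra care, whereas splitting the averaged mismatch first is what forces your weighted telescoping (which in return gives the slightly sharper factor $\tfrac43$).
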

It next bounds the change in the objective function as the local matrices are updated from $\{U_p^{s,0}\}$ to $\{U_p^{s+1,0}\}$, with the shared matrix fixed at $V^{s+1}$.
	\begin{lemma}\label{LeA.3}
		Under the update rule $ U_p^{s+1,0}=U_p^{s,\tau},$ for communication round $s$, it holds that
		\begin{equation}\label{roundwise_U_descent_extended}
			\begin{aligned}
				&F(\{U_p^{s+1,0}\},V^{s+1})-F(\{U_p^{s,0}\},V^{s+1})\\
				&\leq -\frac{\rho_1}{4mL_{3,a}^2} \sum_{p=1}^{m}\sum_{k=0}^{\tau-1}L_{3,p}^2\|\Delta U_p^{s,k}\|_F^2 + \frac{1}{m\rho_1} \sum_{p=1}^{m}\sum_{k=0}^{\tau-1} L_{1,p}^2 \|V_p^{s,k}-V^{s+1}\|_F^2,
			\end{aligned}
		\end{equation}
		where $	L_{3,a}=\max_{p}L_{3,p}.$
	\end{lemma}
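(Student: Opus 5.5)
The plan is to telescope over the $\tau$ local basis updates of round $s$. Write
\begin{equation*}
F(\{U_p^{s,\tau}\},V^{s+1})-F(\{U_p^{s,0}\},V^{s+1})=\frac1m\sum_{p=1}^m\sum_{k=0}^{\tau-1}\bigl[f_p(U_p^{s,k+1},V^{s+1})-f_p(U_p^{s,k},V^{s+1})\bigr].
\end{equation*}
The regularizer $\mu g(V^{s+1})$ cancels because $V$ is fixed. It therefore suffices to bound each single-step difference $f_p(U_p^{s,k+1},V^{s+1})-f_p(U_p^{s,k},V^{s+1})$.

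Each step is handled by comparing with the proximal subproblem, which uses $V_p^{s,k}$ rather than $V^{s+1}$. By Assumption~\ref{assumption2}, $U_p^{s,k+1}$ is a global minimizer of \eqref{Um1}. Since $U_p^{s,k}$ is feasible, this gives the sufficient descent inequality
\begin{equation*}
f_p(U_p^{s,k+1},V_p^{s,k})+\tfrac{\rho_1}{2}\|\Delta U_p^{s,k}\|_F^2\le f_p(U_p^{s,k},V_p^{s,k}).
\end{equation*}
To move from $V_p^{s,k}$ to $V^{s+1}$, I set $\phi(U)=f_p(U,V^{s+1})-f_p(U,V_p^{s,k})$. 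Then
\begin{equation*}
f_p(U_p^{s,k+1},V^{s+1})-f_p(U_p^{s,k},V^{s+1})\le -\tfrac{\rho_1}{2}\|\Delta U_p^{s,k}\|_F^2+\phi(U_p^{s,k+1})-\phi(U_p^{s,k}).
\end{equation*}
By the mean value theorem along the segment, $\phi(U_p^{s,k+1})-\phi(U_p^{s,k})=\langle\nabla_U f_p(\xi,V^{s+1})-\nabla_U f_p(\xi,V_p^{s,k}),\Delta U_p^{s,k}\rangle$ for some point $\xi$ on that segment. Assumption~\ref{assumption1} then bounds the gradient difference by $L_{1,p}\|V_p^{s,k}-V^{s+1}\|_F$.

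The main obstacle is that Assumption~\ref{assumption1} is stated only for feasible $U_p$, and the segment point $\xi$ need not be feasible. My first choice avoids $\xi$ entirely by using the explicit form
\begin{equation*}
\nabla_U f_p(U,V)=2(UV-X_p)V^\top.
\end{equation*}
Direct expansion shows that $\phi$ is a quadratic in $U$, so its increment can be written exactly in terms of the feasible endpoints and the gradient differences taken at those endpoints. If that becomes messy, I will read Assumption~\ref{assumption1} as holding on the convex hull of the feasible set, as the constants are intended, and accept a factor-of-two loss.

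With the cross term bounded by $L_{1,p}\|V_p^{s,k}-V^{s+1}\|_F\|\Delta U_p^{s,k}\|_F$, Young's inequality gives
\begin{equation*}
L_{1,p}\|V_p^{s,k}-V^{s+1}\|_F\|\Delta U_p^{s,k}\|_F\le\tfrac{\rho_1}{4}\|\Delta U_p^{s,k}\|_F^2+\tfrac{1}{\rho_1}L_{1,p}^2\|V_p^{s,k}-V^{s+1}\|_F^2.
\end{equation*}
This leaves $-\tfrac{\rho_1}{4}\|\Delta U_p^{s,k}\|_F^2$ on the descent side. Using $L_{3,p}^2/L_{3,a}^2\le1$, this is at most $-\tfrac{\rho_1}{4L_{3,a}^2}L_{3,p}^2\|\Delta U_p^{s,k}\|_F^2$. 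Summing over $k$ and $p$ and dividing by $m$ yields \eqref{roundwise_U_descent_extended}, where the $\frac1m$ factor appears in both terms.
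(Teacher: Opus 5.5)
Your proposal is correct and follows essentially the same route as the paper's proof. Both arguments take the same steps: the sufficient-descent inequality from global optimality of the proximal subproblem \eqref{Um1}, the split-off term $\phi(U)=f_p(U,V^{s+1})-f_p(U,V_p^{s,k})$ with its increment bounded via $L_{1,p}$, Young's inequality with weight $\rho_1/4$, and finally $L_{3,p}\le L_{3,a}$. Your first-choice handling of the segment issue works exactly and with no constant loss: since $\phi$ is quadratic, $\phi(U_p^{s,k+1})-\phi(U_p^{s,k})=\tfrac12\langle\nabla\phi(U_p^{s,k})+\nabla\phi(U_p^{s,k+1}),\Delta U_p^{s,k}\rangle$, so Assumption~\ref{assumption1} is only needed at the two feasible endpoints. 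This is slightly more careful than the paper, whose integral-form argument applies Assumption~\ref{assumption1} at possibly infeasible points of the segment without comment.
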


	 Lemma~\ref{LeA.3} bounds the objective change due to the local matix updates. The next lemma bounds the stationarity residual of the local matrices, which measures how closely these iterates satisfy the first-order stationarity condition.
	\begin{lemma}\label{LeA.4}
		Suppose that Assumption \ref{assumption1}-\ref{assumption3} hold. Then, for communication round $s$, local iteration $k$, and client $p$, it has
		\begin{equation*}
			\begin{aligned}
				r_p^{s,k+1} \leq 
				2L_{1,p}^{2} \|V_p^{s,k}-V^s\|_F^2 + 2\rho_1^2 \|\Delta U_p^{s,k}\|_F^2,
			\end{aligned}
		\end{equation*}
		where
		\begin{equation*}
        \begin{aligned}
			r_p^{s,k+1} &= \text{dist}^{2}~( 0,  \nabla_{U_p}f_p(U_p^{s,k+1},V^s) + N_{\mathcal O_p}(U_p^{s,k+1}) ),\\
            \mathcal O_p &= \{ U\in\mathbb R^{d_p\times r}: U\geq 0,\ U^\top U=I \}
		\end{aligned}
        \end{equation*} is the feasible set for the local matrix, and $N_{\mathcal O_p}(U)$ denotes the limiting normal cone to $\mathcal O_p$ at the matrix $U$.
	\end{lemma}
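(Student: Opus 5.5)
The plan is to read off the first-order optimality condition of the proximal basis subproblem \eqref{Um1}, which $U_p^{s,k+1}$ satisfies exactly. Under Assumption~\ref{assumption2}, the discussion following it shows that $U_p^{s,k+1}$ is a global minimizer of \eqref{Um1}. Since $\mathcal O_p$ is closed and the objective is smooth, the generalized Fermat rule for the limiting normal cone gives
\begin{equation*}
0\in \nabla_{U_p} f_p(U_p^{s,k+1},V_p^{s,k})+\rho_1(U_p^{s,k+1}-U_p^{s,k})+N_{\mathcal O_p}(U_p^{s,k+1}).
\end{equation*}
Hence the matrix $-\nabla_{U_p} f_p(U_p^{s,k+1},V_p^{s,k})-\rho_1\Delta U_p^{s,k}$ belongs to $N_{\mathcal O_p}(U_p^{s,k+1})$.

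Next, I would use this element of the normal cone as a test point in the definition of $r_p^{s,k+1}$. The distance in $r_p^{s,k+1}$ is taken to the set $\nabla_{U_p}f_p(U_p^{s,k+1},V^s)+N_{\mathcal O_p}(U_p^{s,k+1})$. Choosing the normal vector identified above gives
\begin{equation*}
r_p^{s,k+1}\le \big\|\nabla_{U_p}f_p(U_p^{s,k+1},V^s)-\nabla_{U_p}f_p(U_p^{s,k+1},V_p^{s,k})-\rho_1\Delta U_p^{s,k}\big\|_F^2 .
\end{equation*}

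Then I would apply $\|a+b\|_F^2\le 2\|a\|_F^2+2\|b\|_F^2$ to split this into two pieces. The gradient-difference piece is controlled by Assumption~\ref{assumption1}, applied with the feasible $U_p=U_p^{s,k+1}$, $V_1=V^s$ and $V_2=V_p^{s,k}$; both of these are iterates generated by the algorithm. This gives the bound $2L_{1,p}^2\|V_p^{s,k}-V^s\|_F^2$. The second piece is exactly $2\rho_1^2\|\Delta U_p^{s,k}\|_F^2$, and adding the two yields the claimed inequality.

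The only delicate step is the first one: justifying that a global minimizer over the nonconvex set $\mathcal O_p$ satisfies the stationarity inclusion with the limiting normal cone. I would cite the standard generalized Fermat rule (Rockafellar--Wets, Theorem~8.15). It applies to a smooth function plus the indicator of a closed set, and the limiting subdifferential of that indicator is $N_{\mathcal O_p}$. The remark after Assumption~\ref{assumption2}, which says the update satisfies the first-order stationarity condition, can be invoked directly here. Assumption~\ref{assumption3} is not needed for this bound.
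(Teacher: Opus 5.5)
Your proposal is correct and matches the paper's proof step for step. The paper applies the generalized Fermat rule at the global minimizer $U_p^{s,k+1}$ of \eqref{Um1} to obtain $-\nabla_{U_p}f_p(U_p^{s,k+1},V_p^{s,k})-\rho_1\Delta U_p^{s,k}\in N_{\mathcal O_p}(U_p^{s,k+1})$, uses this normal vector as the test point in $r_p^{s,k+1}$, and concludes with $\|a+b\|_F^2\le 2\|a\|_F^2+2\|b\|_F^2$ and Assumption~\ref{assumption1}; you are also right that Assumption~\ref{assumption3} is not needed here.
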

	
	The proofs of the above lemmas are provided in Appendices~\ref{secA1}, \ref{secA2}, \ref{secA3}, and~\ref{secA4}, respectively.
	 
		It next combines the drift estimates and the local matrix descent bound with the proximal comparison inequality in Lemma~\ref{lemma0}. Before stating the convergence theorem, it introduces the auxiliary function 
\begin{equation*}\label{finalauxiliaryfunction}
    \Phi^s = F(\{U_p^{s,0}\},V^s)-F^\star +\frac{1}{m\tilde\eta} \|\Lambda^s-\overline{\Lambda}^s\|_F^2,
\end{equation*}
where $F^\star$ denotes the optimal value of problem~\eqref{FMF1}.
The auxiliary function $\Phi^s$ combines the objective gap with a measure of the gradient direction discrepancy. For convenience, define
\begin{equation}\label{defCsAs}
    C_s=\frac{1}{m}\|\Lambda^s-\overline{\Lambda}^s\|_F^2, ~
    A_s=\frac{1}{m}\sum_{p=1}^{m}\sum_{k=0}^{\tau-1}
    L_{3,p}^2\|\Delta U_p^{s,k}\|_F^2,
\end{equation}
and
\begin{equation}\label{defRVS}
    R_V^s = \|\widetilde{\nabla}h(\widetilde V^{s+1}) +\nabla_V f(\{U_p^{s,0}\},V^s)\|_F^2.
\end{equation}
The following theorem establishes a finite-round bound on the average stationarity residual for $V$.
	\begin{theorem}\label{Theorem}
	Under Assumptions~\ref{assumption1}-\ref{assumption3}, let $L_{1,a}=\max_{p}L_{1,p}$. If the step sizes and the proximal parameter satisfy
		\begin{equation}\label{final_parameter_conditions}
			\begin{aligned}
				& 0<\tilde{\eta}\leq\frac{1}{10L_2},~\eta_g\geq\frac{3}{2},\\
				&\rho_1> \max\{ 72\tau\tilde{\eta}L_{1,a}^2, 8L_{3,a}^2( \tilde{\eta} ( \frac{\tau+1}{2} + 12L_2^2\alpha^2\tau^3 ) \\
				&+ \frac{ 2\alpha^2\tau^2(\tau-1) + 48\alpha^4\tau^5L_2^2 }{\tilde{\eta}}), 8\sqrt{3} L_{3,a}L_{1,a}\alpha\tau^2 \}.
			\end{aligned}
		\end{equation}
		Then, there exist constants $c_1,c_2,c_3>0$ such that, for every $T\geq1$,
		\begin{equation*}\label{eq:joint-descent-bound}
			\frac{1}{T}\sum_{s=1}^{T} (c_1R_V^s +c_2C_s +c_3 A_s ) \leq \frac{\Phi^1}{T}
			+c_4 c_h^2.
		\end{equation*}
		Consequently,
		\begin{equation*}\label{eq:V-residual-bound}
			\frac{1}{T}\sum_{s=1}^{T}R_V^s \leq \frac{\Phi^1}{c_1T}+ \frac{c_4}{c_1}c_h^2,
		\end{equation*}
		where $c_1=(\tilde{\eta} (\frac12-L_2\tilde{\eta} -6L_2^2\tau^2\alpha^2 ) - \frac{24\alpha^4\tau^4L_2^2}{\tilde{\eta}} - \frac{12\tau^3\alpha^2L_{1,a}^2}{\rho_1}),~ c_4 = 24\tilde{\eta}L_2^2\tau^2\alpha^2 + \frac{96\alpha^4\tau^4L_2^2}{\tilde{\eta}} + \frac{48\tau^3\alpha^2L_{1,a}^2}{\rho_1},$ and under \eqref{final_parameter_conditions}, it holds that $c_1>0$.
	\end{theorem}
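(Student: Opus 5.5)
The proof establishes a one-round descent inequality for the auxiliary function $\Phi^s$, of the form
\begin{equation*}
\Phi^{s+1}\le \Phi^s - c_1R_V^s - c_2C_s - c_3A_s + c_4c_h^2,
\end{equation*}
and then telescopes it over $s=1,\dots,T$, using $\Phi^{T+1}\ge 0$ (since $F\ge F^\star$ and the drift term is nonnegative). The objective increment splits as
\begin{equation*}
F(\{U_p^{s+1,0}\},V^{s+1})-F(\{U_p^{s,0}\},V^s)=\big[F(\{U_p^{s+1,0}\},V^{s+1})-F(\{U_p^{s,0}\},V^{s+1})\big]+\big[F(\{U_p^{s,0}\},V^{s+1})-F(\{U_p^{s,0}\},V^{s})\big].
\end{equation*}
The first bracket is handled by Lemma~\ref{LeA.3}. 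It yields the negative term $-\frac{\rho_1}{4L_{3,a}^2}A_s$ plus $\frac{1}{m\rho_1}\sum L_{1,p}^2\|V_p^{s,k}-V^{s+1}\|_F^2$. I would split the latter into $\|V_p^{s,k}-V^s\|^2$, which is a drift term, and $\tau\|V^{s+1}-V^s\|^2$.

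For the second bracket, with $\{U_p^{s,0}\}$ fixed, I apply Lemma~\ref{lemma0} twice with $\varphi=f(\{U_p^{s,0}\},\cdot)$ and $L=L_2$.
\begin{enumerate}
\item[(a)] Take $Y=V^{s+1}=\mathcal P_{\tilde\eta\mu}(\overline V^{s+1})$. By \eqref{agggrad} this equals $\mathcal P_{\tilde\eta\mu}(V^s-\tilde\eta D)$ with $D=\frac1m\sum_p(G_p^s+C_p^s)$. Take $W=\widetilde V^{s+1}$ and $Z=V^s$.
\item[(b)] Take $Y=W'=\widetilde V^{s+1}$ with the exact gradient, compared against $W=V^s$. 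This gives the sufficient-decrease term $-\frac{1}{\tilde\eta}\|\widetilde V^{s+1}-V^s\|^2$.
\end{enumerate}
Adding the two gives
\begin{equation*}
F(V^{s+1})-F(V^s)\le -(\tfrac{1}{2\tilde\eta}-L_2)\|\widetilde V^{s+1}-V^s\|^2-\tfrac{1}{2\tilde\eta}\|V^{s+1}-V^s\|^2+\tilde\eta\|\nabla f-D\|^2 .
\end{equation*}
The optimality condition of the prox gives $\widetilde V^{s+1}-V^s=-\tilde\eta(\widetilde\nabla h(\widetilde V^{s+1})+\nabla_Vf)$. This converts $\|\widetilde V^{s+1}-V^s\|^2/\tilde\eta$ into $\tilde\eta R_V^s$, which is the source of the leading $\tilde\eta(\frac12-L_2\tilde\eta)$ in $c_1$.

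Next I bound the gradient mismatch $\|\nabla_Vf(\{U_p^{s,0}\},V^s)-D\|^2$. Using \eqref{cordiff}, $C_p^s=\frac1m\sum_qG_q^{s-1}-G_p^{s-1}$ sums to zero, so
\begin{equation*}
D-\nabla_Vf=\frac1m\sum_p\big(G_p^s-\nabla_Vf_p(U_p^{s,0},V^s)\big).
\end{equation*}
By the Lipschitz constants $L_2$ and $L_{3,p}$, this is controlled by the drift $\frac{1}{m\tau}\sum\|V_p^{s,k}-V^s\|^2$ and by $\|U_p^{s,k+1}-U_p^{s,0}\|^2\le \tau\sum_k\|\Delta U_p^{s,k}\|^2$. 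I then insert Lemma~\ref{LeA.12}, which replaces drift with $C_s$, $c_h^2$, $R_V^s$ and $A_s$; its hypothesis on $\tilde\eta$ follows from $\tilde\eta\le\frac1{10L_2}$ and $\eta_g\ge\frac32$. The resulting $R_V^s$ and $c_h^2$ coefficients are exactly the subtracted pieces in $c_1$ (namely $6L_2^2\tau^2\alpha^2$, $24\alpha^4\tau^4L_2^2/\tilde\eta$ and $12\tau^3\alpha^2L_{1,a}^2/\rho_1$) and the pieces of $c_4$.

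For the drift part of $\Phi$, I use Lemma~\ref{LeA.21} divided by $\tilde\eta$:
\begin{equation*}
\tfrac{1}{\tilde\eta}C_{s+1}\le \tfrac{2\alpha^2\tau^2L_2^2}{\tilde\eta}\|V^{s+1}-V^s\|^2+(\text{drift})+(A_s\text{ terms}).
\end{equation*}
The $\|V^{s+1}-V^s\|^2$ term is absorbed by the $-\frac{1}{2\tilde\eta}\|V^{s+1}-V^s\|^2$ from (a), since $\alpha\tau=\tilde\eta/\eta_g$ is small. The same negative term also absorbs the $\tau L_{1,a}^2/\rho_1$ contribution from Lemma~\ref{LeA.3}, using $\rho_1>72\tau\tilde\eta L_{1,a}^2$. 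Drift is again replaced via Lemma~\ref{LeA.12}. The carried term $\frac{1}{\tilde\eta}C_s$ from $\Phi^s$ must dominate all positive $C_s$ coefficients, which come from $6\tau C_s$ times the drift prefactors; this leaves $c_2>0$ once $\eta_g\ge 3/2$ makes $\alpha\tau\le\frac23\tilde\eta$. Finally, the total positive $A_s$ coefficient must be beaten by $\frac{\rho_1}{4L_{3,a}^2}$. That is precisely the second entry of the max in \eqref{final_parameter_conditions}, and the third entry handles the $L_{1,a}L_{3,a}$ cross term. Lemma~\ref{LeA.4} is not needed here.

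The main obstacle I expect is this bookkeeping: verifying that each coefficient collected for $C_s$, $A_s$, $R_V^s$ and $\|V^{s+1}-V^s\|^2$ has the right sign under \eqref{final_parameter_conditions}. That $c_1>0$ under these conditions is a direct check: $L_2\tilde\eta\le\frac1{10}$, $6L_2^2\tau^2\alpha^2\le\frac{6}{100\eta_g^2}$, and the remaining terms are small by the bounds on $\tilde\eta$ and $\rho_1$. Summing the descent inequality and dividing by $T$ yields the first bound. Dropping the nonnegative $C_s$ and $A_s$ terms and dividing by $c_1$ yields the second.
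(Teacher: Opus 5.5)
Your proposal follows the paper's proof essentially step for step: the reformulation $V^{s+1}=\mathcal P_{\tilde\eta\mu}(V^s-\tilde\eta G^s)$ via $\sum_pC_p^s=0$, two applications of Lemma~\ref{lemma0} through the virtual iterate $\widetilde V^{s+1}$, Lemma~\ref{LeA.3} for the $U_p$ change, Lemmas~\ref{LeA.12} and~\ref{LeA.21} for the drift, absorption of $\|V^{s+1}-V^s\|_F^2$, and telescoping with $\Phi^{T+1}\ge0$. To recover the stated $c_1$, $c_4$ and the second entry of the $\rho_1$ condition, you need the sharper constants: Young's inequality with coefficient $\frac{\tilde\eta}{2}$ (not $\tilde\eta$) on the gradient mismatch, the $+\frac{L_2}{2}\|V^{s+1}-V^s\|_F^2$ term from Lemma~\ref{lemma0} kept in your combined inequality, and the bound $\sum_{k=0}^{\tau-1}\|U_p^{s,k+1}-U_p^{s,0}\|_F^2\le\frac{\tau(\tau+1)}{2}\sum_{k=0}^{\tau-1}\|\Delta U_p^{s,k}\|_F^2$ instead of the cruder factor $\tau^2$; otherwise the $R_V^s$, $c_h^2$ and $A_s$ coefficients come out larger than stated, and $c_3>0$ is no longer guaranteed by the given $\rho_1$ bound.
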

    
	\begin{proof}
    The proof proceeds in four steps.

    \textbf{Step 1: It reformulates the global update for $V^{s+1}$.}
    
		Algorithm~\ref{al1} gives $V^s=\mathcal P_{\tilde\eta\mu}(\overline V^s)$. The definition of $f(\{U_p\},V)$ yields
		\begin{equation*}
			\nabla_V f(\{  U^{s,0}\},V^s)
			=	\frac{1}{m}\sum_{p=1}^{m}	\nabla_V f_p(U_p^{s,0},V^s).
		\end{equation*}
		and introduce the averaged local gradient direction
        \begin{equation*}\label{aggregated_direction}
G^s=\frac{1}{m}\sum_{p=1}^{m}G_p^s.
\end{equation*}
		Summing the local updates in Algorithm~\ref{al1} and using
$\widehat V_p^{s,0}=V^s$ yields
		\begin{equation}\label{cumulpreproxupdate}
			\begin{aligned}
				\widehat V_p^{s,\tau}=V^s-\alpha\sum_{k=0}^{\tau-1}	(\nabla_V f_p	(U_p^{s,k+1},V_p^{s,k})	+C_p^s).
			\end{aligned}
		\end{equation}
		Substituting \eqref{cumulpreproxupdate} into the global update gives
		\begin{equation*}
			\begin{aligned}
				\overline {V}^{s+1}&=	V^s-\frac{\eta_g\alpha}{m}\sum_{p=1}^{m}\sum_{k=0}^{\tau-1}
				(\nabla_V f_p(U_p^{s,k+1},V_p^{s,k})+C_p^s)	\\
				&=	V^s	-\eta_g\alpha\tau
				(\frac{1}{m\tau}\sum_{p=1}^{m}\sum_{k=0}^{\tau-1}\nabla_V f_p(	U_p^{s,k+1},V_p^{s,k})+\frac{1}{m}\sum_{p=1}^{m}C_p^s).
			\end{aligned}
		\end{equation*}
		Since the client drift correction terms satisfy $\frac{1}{m}\sum_{p=1}^{m}C_p^s	=0,$
		and $\tilde{\eta}=\eta_g\alpha\tau$, the global pre-proximal variable can be equivalently written as $\overline {V}^{s+1}= V^s-\tilde{\eta}G^s.$ Consequently, by applying the global proximal operator, the global post-proximal iterate satisfies
		\begin{equation*}\label{global_proximal_gradient_form}
			V^{s+1}	=\mathcal P_{\tilde{\eta}\mu}(	V^s-\tilde{\eta}G^s).
		\end{equation*}
        
	\textbf{Step 2: It bounds the change in $F(\{U_p^{s,0}\},V^s)$ over one communication round.}
    
		Applying Lemma~\ref{lemma0} to the virtual iterate the virtual iterate $\widetilde{V}^{s+1}$, using $Y=\widetilde{V}^{s+1},Z=W=V^s,$ and $D=\nabla_V f(\{ U^{s,0}\},V^s)$ yields
		\begin{equation*}
			\begin{aligned}
				F(\{ U^{s,0}\},\widetilde V^{s+1}) &\leq F(\{ U^{s,0}\},{V}^{s}) +(\frac{L_2}{2}- \frac{1}{\tilde{\eta}})\| \widetilde{V}^{s+1} - {V}^{s} \|_{F}^2.
			\end{aligned}
		\end{equation*}
		For the actual iterate $V^{s+1}$, using $Y={V}^{s+1},X={V}^{s} ,W=\widetilde{V}^{s+1}$ and $D=G^{s}$ in Lemma~\ref{lemma0} yields
		\begin{equation*}
			\begin{aligned}
				&\quad F(\{ U^{s,0}\},V^{s+1})\\
                &\leq F(\{ U^{s,0}\},\widetilde{V}^{s+1}) - \frac{1}{\tilde{\eta}} \langle	V^{s+1}-V^s, V^{s+1}-\widetilde V^{s+1} \rangle + \frac{L_2}{2} \| {V}^{s+1} - {V}^{s} \|_{F}^2\\
				&+ \langle \nabla_V f(\{ U^{s,0}\},{V}^{s}) - G^{s}, {V}^{s+1} - \widetilde{V}^{s+1} \rangle   + \frac{L_2}{2} \| \widetilde{V}^{s+1} -  {V}^{s}\|_{F}^2\\
				&\leq F(\{U^{s,0}\},\widetilde V^{s+1}) + \langle \nabla_V f(\{U^{s,0}\},V^s)-G^s,
				V^{s+1}-\widetilde V^{s+1} \rangle \\
				& + (\frac{L_2}{2} - \frac{1}{2\tilde{\eta}}) \|V^{s+1}-V^s\|_F^2 + ( \frac{L_2}{2} + \frac{1}{2\tilde{\eta}} ) \|\widetilde V^{s+1}-V^s\|_F^2 - \frac{1}{2\tilde{\eta}} \|V^{s+1}-\widetilde V^{s+1}\|_F^2.
			\end{aligned}
		\end{equation*}
		
		To eliminate the virtual function value $F(\{ U^{s,0}\},\widetilde{V}^{s+1})$, the above inequalities can be added to obtain
		\begin{equation}\label{66}
			\begin{aligned}
				&\quad F(\{ U^{s,0}\},V^{s+1})\\
				&\leq F(\{ U^{s,0}\},{V}^{s})+ (\frac{L_2}{2}- \frac{1}{2\tilde{\eta}})\| V^{s+1}-V^s \|_{F}^2  \\
                &+ \underbrace{\langle \nabla_V f(\{ U^{s,0}\},V^s)-G^s,
					V^{s+1}-\widetilde V^{s+1} \rangle}_{(\text{I})}\\
				&+ (L_2-\frac{1}{2\tilde{\eta}}) \| \widetilde{V}^{s+1} - V^s \|_{F}^2- \underbrace{\frac{1}{2\tilde{\eta}}\| {V}^{s+1} - \widetilde{V}^{s+1} \|_{F}^2}_{(\text{II})}.
			\end{aligned}
		\end{equation}
		
		By Young's inequality, $(\text{I})-(\text{II}) \leq\|\nabla_V f( \{U^{s,0}\},V^s)-G^s\|_{F}^{2}.$
		Using the definitions of $\nabla_V f(\{U_p^{s,0}\} ,V^s)$ and $G^s$, it has
		\begin{equation*}
			\begin{aligned}
				\nabla_V f( \{U^{s,0}\},V^s)-G^s  = \frac{1}{m\tau} \sum_{p=1}^{m}
				\sum_{k=0}^{\tau-1}( \nabla_V f_p(U_p^{s,0},V^s) - \nabla_V f_p(U_p^{s,k+1},V_p^{s,k})).
			\end{aligned}
		\end{equation*}
		Hence, by Jensen's inequality,
		\begin{equation*}
			\begin{aligned}
				\|\nabla_V f( \{U^{s,0}\},V^s)-G^s\|_F^2 \leq \frac{1}{m\tau} \sum_{p=1}^{m} \sum_{k=0}^{\tau-1} \| \nabla_V f_p(U_p^{s,0},V^s) - \nabla_V f_p(U_p^{s,k+1},V_p^{s,k}) \|_F^2 .
			\end{aligned}
		\end{equation*}
		Moreover, by the Lipschitz continuity properties established above,
		\begin{equation*}
			\begin{aligned}
				& \| \nabla_V f_p(U_p^{s,0},V^s) - \nabla_V f_p(U_p^{s,k+1},V_p^{s,k}) \|_F^2 \\
                &\leq 2L_{3,p}^2 \|U_p^{s,k+1}-U_p^{s,0}\|_F^2 + 2L_2^2 \|V_p^{s,k}-V^s\|_F^2 .
			\end{aligned}
		\end{equation*}
		Combining the above inequalities gives
		\begin{equation*}
			\begin{aligned}
				(\text{I})-(\text{II})
				&\leq \frac{\tilde\eta L_2^2}{m\tau} \sum_{p=1}^{m} \sum_{k=0}^{\tau-1} \|V_p^{s,k}-V^s\|_F^2  + \frac{\tilde\eta}{m\tau} \sum_{p=1}^{m} \sum_{k=0}^{\tau-1} L_{3,p}^2 \|U_p^{s,k+1}-U_p^{s,0}\|_F^2 .
			\end{aligned}
		\end{equation*}
		
		Substituting this equation into \eqref{66}, it has 
		\begin{equation}\label{proof_V_descent_start}
			\begin{aligned}
				F(\{ U^{s,0}\},V^{s+1})  &\leq F(\{ U^{s,0}\},{V}^{s})+ 
				(\frac{L_2}{2}-	\frac{1}{2\tilde{\eta}})\|V^{s+1}-V^s\|_F^2\\
                &+\frac{\tilde{\eta}L_2^2}{m\tau}\sum_{p=1}^{m}\sum_{k=0}^{\tau-1}\|V_p^{s,k}-V^s\|_F^2+ (L_2-\frac{1}{2\tilde{\eta}})\|\widetilde V^{s+1}-V^s\|_F^2 \\
				&+\frac{\tilde{\eta}}{m\tau}\sum_{p=1}^{m}\sum_{k=0}^{\tau-1}
				L_{3,p}^2\|U_p^{s,k+1}-U_p^{s,0}\|_F^2 .
			\end{aligned}
		\end{equation}
		By Lemma~\ref{LeA.3}, the change caused by the local $U_p$ updates between two communication rounds is bounded by 
		\begin{equation}\label{proof_use_lemma3}
			\begin{aligned}
				&\quad F(\{U_p^{s+1,0}\},V^{s+1}) - F(\{U_p^{s,0}\},V^{s+1}) \\
				&\leq -\frac{\rho_1}{4mL_{3,a}^2} \sum_{p=1}^{m}\sum_{k=0}^{\tau-1} L_{3,p}^2 	\|\Delta U_p^{s,k}\|_F^2 + \frac{1}{m\rho_1} \sum_{p=1}^{m}\sum_{k=0}^{\tau-1} L_{1,p}^2 \|V_p^{s,k}-V^{s+1}\|_F^2.
			\end{aligned}
		\end{equation}
		
		The last term in \eqref{proof_use_lemma3} can be controlled as follows. Since $ V_p^{s,k}-V^{s+1} = (V_p^{s,k}-V^s)+(V^s-V^{s+1}),$ it has 
		\begin{equation}\label{proof_cross_error_1} 
			\begin{aligned}
				&\quad\frac{1}{m\rho_1} \sum_{p=1}^{m}\sum_{k=0}^{\tau-1} L_{1,p}^2 \|V_p^{s,k}-V^{s+1}\|_F^2\\
				&\leq \frac{2L_{1,a}^2}{m\rho_1} \sum_{p=1}^{m}\sum_{k=0}^{\tau-1} \|V_p^{s,k}-V^s\|_F^2  + \frac{2\tau L_{1,a}^2}{\rho_1} \|V^{s+1}-V^s\|_F^2 .
			\end{aligned} 
		\end{equation}
		
		Applying Lemma~\ref{LeA.12} to the first term on the right hand side of \eqref{proof_cross_error_1} gives 
		\begin{equation}\label{proof_cross_error_2} 
			\begin{aligned}
				&\quad\frac{2L_{1,a}^2}{m\rho_1} \sum_{p=1}^{m}\sum_{k=0}^{\tau-1} \|V_p^{s,k}-V^s\|_F^2
				\\
				&\leq \frac{12\tau L_{1,a}^2}{m\rho_1} \|\Lambda^s-\overline{\Lambda}^s\|_F^2 
				+ \frac{12\tau^3\alpha^2L_{1,a}^2}{\rho_1} \| \widetilde{\nabla}h(\widetilde V^{s+1}) + \nabla_V f(\{U_p^{s,0}\},V^s) \|_F^2 \\
				&+ \frac{48\tau^3\alpha^2L_{1,a}^2}{\rho_1} c_h^2 + \frac{24\alpha^2\tau^4L_{1,a}^2}{m\rho_1} \sum_{p=1}^{m}\sum_{k=0}^{\tau-1} L_{3,p}^2 \|\Delta U_p^{s,k}\|_F^2 .
			\end{aligned} 
		\end{equation}
        
		\textbf{Step 3: It establishes a recursion for the auxiliary function $\Phi^s$.}
        
		Next, multiplying Lemma~\ref{LeA.21} by $1/\tilde{\eta}$ and using Lemma~\ref{LeA.12}, it obtains
		\begin{equation}\label{proof_Lambda_scaled}
			\begin{aligned}
				\frac{1}{m\tilde{\eta}} \|\Lambda^{s+1}-\overline{\Lambda}^{s+1}\|_F^2 	&\leq \frac{2\alpha^2\tau^2L_2^2}{\tilde{\eta}} \|V^{s+1}-V^s\|_F^2 + \frac{24\alpha^2\tau^2L_2^2}{m\tilde{\eta}}
				\|\Lambda^s-\overline{\Lambda}^s\|_F^2 \\
				&+ \frac{96\alpha^4\tau^4L_2^2}{\tilde{\eta}}c_h^2 + \frac{24\alpha^4\tau^4L_2^2}{\tilde{\eta}}\|
				\widetilde{\nabla}h(\widetilde V^{s+1}) + 	\nabla_V f(\{U_p^{s,0}\},V^s) \|_F^2 \\
				&\quad+ \frac{ 2\alpha^2\tau^2(\tau-1) + 	48\alpha^4\tau^5L_2^2
				}{ m\tilde{\eta} } \sum_{p=1}^{m}\sum_{k=0}^{\tau-1} L_{3,p}^2 \|\Delta U_p^{s,k}\|_F^2 .
			\end{aligned}
		\end{equation}
		
		Adding \eqref{proof_V_descent_start},	\eqref{proof_use_lemma3}, \eqref{proof_cross_error_1},
		\eqref{proof_cross_error_2}, and \eqref{proof_Lambda_scaled}, and subtracting $F^\star$ from both sides, it gets
		\begin{equation}\label{proof_before_absorption}
			\begin{aligned}
				\Phi^{s+1}  &\leq \Phi^s -c_1 \| \widetilde{\nabla}h(\widetilde V^{s+1}) + \nabla_V f(\{U_p^{s,0}\},V^s) \|_F^2  - \frac{c_2}{m}\|\Lambda^s-\overline{\Lambda}^s\|_F^2\\
				&  -\frac{c_3}{m}  \sum_{p=1}^{m}\sum_{k=0}^{\tau-1} L_{3,p}^2 \|\Delta U_p^{s,k}\|_F^2  + c_4c_h^2-c_5 \|V^{s+1}-V^s\|_F^2,
			\end{aligned}
		\end{equation}	
		where $c_1$ and $c_4$ are defined in Theorem~\ref{Theorem}, and $c_2=\frac{1}{\tilde\eta} -6\tilde\eta L_2^2 -\frac{24\alpha^2\tau^2L_2^2}{\tilde\eta} -\frac{12\tau L_{1,a}^2}{\rho_1},$ $c_3=\frac{\rho_1}{4L_{3,a}^2}
-\tilde\eta ( \frac{\tau+1}{2}+12L_2^2\alpha^2\tau^3 ) -\frac{2\alpha^2\tau^2(\tau-1) +48\alpha^4\tau^5L_2^2}{\tilde\eta} -\frac{24\alpha^2\tau^4L_{1,a}^2}{\rho_1}$, $c_5= \frac{1-4\alpha^2\tau^2L_2^2}{2\tilde{\eta}} - \frac{L_2}{2} - \frac{2\tau L_{1,a}^2}{\rho_1}.$
The parameter conditions imply
$ \alpha\tau=\frac{\tilde\eta}{\eta_g} \leq\frac{2\tilde\eta}{3},~
L_2\tilde\eta\leq\frac{1}{10},~
\frac{2\tau\tilde\eta L_{1,a}^2}{\rho_1}<\frac{1}{36}.$
Consequently,\begin{equation*}
\begin{aligned}
\tilde\eta c_5
&=\frac12-2\alpha^2\tau^2L_2^2
-\frac{L_2\tilde\eta}{2}
-\frac{2\tau\tilde\eta L_{1,a}^2}{\rho_1}\\
&>\frac12-\frac{2}{225}-\frac{1}{20}-\frac{1}{36}
=\frac{31}{75}>0.
\end{aligned}\end{equation*}	
Thus, $c_5>0$, and omitting the nonpositive term $-c_5\|V^{s+1}-V^s\|_F^2$ yields
		\begin{equation*}
			\begin{aligned}
				\Phi^{s+1}  &\leq \Phi^s -c_1 \| \widetilde{\nabla}h(\widetilde V^{s+1}) + \nabla_V f(\{U_p^{s,0}\},V^s) \|_F^2  - \frac{c_2}{m}\|\Lambda^s-\overline{\Lambda}^s\|_F^2\\
				&  -\frac{c_3}{m}  \sum_{p=1}^{m}\sum_{k=0}^{\tau-1} L_{3,p}^2 \|\Delta U_p^{s,k}\|_F^2  + c_4c_h^2.
			\end{aligned}
		\end{equation*}	
       Following the definition of $C_s,A_s,$ and $R_V^s$ in \eqref{defCsAs} and \eqref{defRVS}, it has
        \begin{equation}\label{masterLyapunovdescent}
		\begin{aligned}
			\Phi^{s+1}-\Phi^s \leq  & -c_1R_V^s -c_2C_s -c_3 A_s +c_4 c_h^2.
		\end{aligned}
	\end{equation}

\textbf{Step 4: It bounds the average stationarity residual $\frac{1}{T}\sum_{s=1}^{T}R_V^s$.}
    
		Summing the inequality \eqref{masterLyapunovdescent} over $s=1,\cdots,T$ yields
		\begin{equation*}
			\begin{aligned}
				&\Phi^1-\Phi^{T+1} + T c_4c_h^2\geq c_3 A_s +c_1R_V^s + c_2C_s.
			\end{aligned}
		\end{equation*}
		
		Since $\Phi^{T+1}\geq0$, it has
		\begin{equation*}
			\begin{aligned}
				&\Phi^1 + T c_4c_h^2\geq \sum_{s=1}^{T}(c_1R_V^s + c_2C_s +c_3 A_s) .
			\end{aligned}
		\end{equation*}
		Since $C_s,A_s\geq0$ and $c_1,c_2,c_3>0$, it follows that
		\begin{equation*}
			\begin{aligned}
				&\frac{1}{T} \sum_{s=1}^{T} R_V^s  \leq \frac{\Phi^1}{c_1 T} + \frac{c_4}{c_1}c_h^2 .
			\end{aligned}
		\end{equation*}
		The proof is complete.
		
	\end{proof}
	
	Next, combining the preceding estimate with Lemma~\ref{LeA.4} yields a finite round bound on the stationarity residual of the local matrices.
	\begin{theorem}\label{Theorem2}
		Suppose that Assumption \ref{assumption1}-\ref{assumption3} hold and the parameter conditions of Theorem \ref{Theorem} are satisfied. Define
		\begin{equation*}
			R_U^s = \frac{1}{m}\sum_{p=1}^{m} r_p^{s,\tau}.
		\end{equation*}
		Then, 
		\begin{equation}\label{Uresidualcomplexity}
			\frac{1}{T}\sum_{s=1}^{T}R_U^s
			\leq \frac{\kappa (\Phi^1-\underline{\Phi})}{T}
			+ (\kappa c_4+b_1 )c_h^2,
		\end{equation}
		where $ \underline{\Phi}=\inf_s\Phi^s,$ $\kappa=\max\{b_2/c_1,b_3/c_2,b_4/c_3\}$, and $b_1=48\tau^3\alpha^2L_{1,a}^2$. The constants $b_2,b_3,b_4$ are defined in the proof.
	\end{theorem}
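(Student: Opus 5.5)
Starting from Lemma~\ref{LeA.4} with $k=\tau-1$, for every client $p$ and round $s$ we have
\begin{equation*}
r_p^{s,\tau}\le 2L_{1,p}^2\|V_p^{s,\tau-1}-V^s\|_F^2+2\rho_1^2\|\Delta U_p^{s,\tau-1}\|_F^2 .
\end{equation*}
Averaging over $p$ and enlarging each single term to the full sum over $k=0,\dots,\tau-1$ gives
\begin{equation*}
R_U^s\le \frac{2L_{1,a}^2}{m}\sum_{p=1}^m\sum_{k=0}^{\tau-1}\|V_p^{s,k}-V^s\|_F^2+\frac{2\rho_1^2}{m}\sum_{p=1}^m\sum_{k=0}^{\tau-1}\|\Delta U_p^{s,k}\|_F^2 .
\end{equation*}
If some $L_{3,p}=0$, i.e.\ $X_p=0$, that client's $U_p$ is irrelevant. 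Otherwise set $L_{3,\min}=\min_p L_{3,p}>0$, so that $\|\Delta U_p^{s,k}\|_F^2\le L_{3,p}^2\|\Delta U_p^{s,k}\|_F^2/L_{3,\min}^2$. The second term is then bounded by $(2\rho_1^2/L_{3,\min}^2)A_s$.

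Next, the drift term is handled by Lemma~\ref{LeA.12}, exactly as in \eqref{proof_cross_error_2}. Multiplying the lemma by $2L_{1,a}^2/m$ gives
\begin{equation*}
\frac{2L_{1,a}^2}{m}\sum_{p,k}\|V_p^{s,k}-V^s\|_F^2\le 12\tau L_{1,a}^2C_s+12\tau^3\alpha^2L_{1,a}^2R_V^s+48\tau^3\alpha^2L_{1,a}^2c_h^2+24\alpha^2\tau^4L_{1,a}^2A_s .
\end{equation*}
This fixes $b_1=48\tau^3\alpha^2L_{1,a}^2$, which matches the statement. Collecting terms yields
\begin{equation*}
R_U^s\le b_2R_V^s+b_3C_s+b_4A_s+b_1c_h^2,
\end{equation*}
with $b_2=12\tau^3\alpha^2L_{1,a}^2$, $b_3=12\tau L_{1,a}^2$ and $b_4=24\alpha^2\tau^4L_{1,a}^2+2\rho_1^2/L_{3,\min}^2$.

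The key inequality follows from $\kappa=\max\{b_2/c_1,b_3/c_2,b_4/c_3\}$:
\begin{equation*}
b_2R_V^s+b_3C_s+b_4A_s\le\kappa(c_1R_V^s+c_2C_s+c_3A_s).
\end{equation*}
This uses $c_1,c_2,c_3>0$ and the nonnegativity of $R_V^s,C_s,A_s$. Positivity of $c_1$ is part of Theorem~\ref{Theorem}. For $c_2,c_3$ I would re-check them from the parameter conditions \eqref{final_parameter_conditions}, in the same way $c_5>0$ was checked. For $c_2$, use $\tilde\eta L_2\le 1/10$, $\alpha\tau\le 2\tilde\eta/3$ and $\rho_1>72\tau\tilde\eta L_{1,a}^2$. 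For $c_3$, the second entry of the max in the $\rho_1$ condition absorbs all but the last term, and the third entry, $\rho_1>8\sqrt3L_{3,a}L_{1,a}\alpha\tau^2$, controls $24\alpha^2\tau^4L_{1,a}^2/\rho_1$ against $\rho_1/(8L_{3,a}^2)$.

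Finally, I would invoke the per-round descent \eqref{masterLyapunovdescent}:
\begin{equation*}
c_1R_V^s+c_2C_s+c_3A_s\le\Phi^s-\Phi^{s+1}+c_4c_h^2 .
\end{equation*}
Summing over $s=1,\dots,T$ telescopes to at most $\Phi^1-\Phi^{T+1}+Tc_4c_h^2\le\Phi^1-\underline\Phi+Tc_4c_h^2$. Multiplying by $\kappa$, adding $Tb_1c_h^2$ and dividing by $T$ gives \eqref{Uresidualcomplexity}.

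The main obstacle is the mismatch between Lemma~\ref{LeA.4} and $A_s$. The lemma bounds $r_p$ with unweighted $\|\Delta U_p\|^2$ and $L_{1,p}$ drift terms, whereas $A_s$ is weighted by $L_{3,p}^2$. This is the reason for dividing by $L_{3,\min}$, and it produces the large $\rho_1^2$ factor in $b_4$. A second concern is verifying $c_2,c_3>0$ cleanly, since Theorem~\ref{Theorem} asserts this only implicitly.
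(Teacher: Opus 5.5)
Your proposal is correct and follows the paper's proof of Theorem~\ref{Theorem2} step for step. It applies Lemma~\ref{LeA.4} at $k=\tau-1$, enlarges to the full sum over $k$, and divides by $\min_p L_{3,p}$. Substituting Lemma~\ref{LeA.12} then gives the same $b_1,\dots,b_4$, the terms are dominated via $\kappa$, and the descent inequality \eqref{masterLyapunovdescent} is telescoped. Your sketched checks of $c_2,c_3>0$, which the paper only asserts, do go through; for instance, $\rho_1>8\sqrt3L_{3,a}L_{1,a}\alpha\tau^2$ gives exactly $24\alpha^2\tau^4L_{1,a}^2/\rho_1<\rho_1/(8L_{3,a}^2)$.
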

	\begin{proof}
		Let $ L_{3,i}=\min_{p }L_{3,p}>0.$ Applying Lemma~\ref{LeA.4} to $k=\tau-1$, and enlarging the resulting terms by summing over all local iterations, it obtains
		\begin{equation}\label{roundUboundstep1}
			R_U^s \leq 2L_{1,a}^2\frac{1}{m}\sum_{p=1}^{m} 	\sum_{k=0}^{\tau-1} \|V_p^{s,k}-V^s \|_F^2 + \frac{2\rho_1^2}{L_{3,i}^2}A_s.
		\end{equation}
		
		Moreover, Lemma~\ref{LeA.12} gives
		\begin{equation}\label{localcopyboundtheorem2}
			\begin{aligned}
				&\frac{1}{m}\sum_{p=1}^{m} 	\sum_{k=0}^{\tau-1} \|V_p^{s,k}-V^s \|_F^2  \leq 6\tau C_s +24\tau^3\alpha^2c_h^2
				+6\tau^3\alpha^2R_V^s
				+12\alpha^2\tau^4A_s.
			\end{aligned}
		\end{equation}
		Substituting \eqref{localcopyboundtheorem2} into
		\eqref{roundUboundstep1} yields
		\begin{equation}\label{roundwiseUboundinproof}
			R_U^s \leq b_2R_V^s +b_3C_s +b_4A_s +b_1c_h^2,
		\end{equation}
		where
		\begin{equation*}
			\begin{aligned}
				b_2 =12\tau^3\alpha^2L_{1,a}^2,~b_3=12\tau L_{1,a}^2,b_4= 24\alpha^2\tau^4L_{1,a}^2 +\frac{2\rho_1^2}{L_{3,i}^2}.
			\end{aligned}
		\end{equation*}
		
		Let
		\begin{equation*}
			 \kappa= \max \{ \frac{b_2}{c_1}, \frac{b_3}{c_2}, \frac{b_4}{c_3} \}.
		\end{equation*}
		It follows from \eqref{roundwiseUboundinproof} that
		\begin{equation*}
			R_U^s \leq \kappa ( c_1R_V^s+c_2C_s+c_3A_s ) +b_1c_h^2.
		\end{equation*}
		
		Summing \eqref{masterLyapunovdescent} from $s=1$ to $s=T$ and using $\Phi^{T+1}\geq\underline{\Phi}$ give
		\begin{equation*}
			\sum_{s=1}^{T} (
			c_1R_V^s+c_2C_s+c_3A_s )
			\leq \Phi^1-\underline{\Phi}+T c_4c_h^2,
		\end{equation*}
		where $ \underline{\Phi}=\inf_s\Phi^s.$ Therefore,
		\begin{equation*}
			\sum_{s=1}^{T}R_U^s \leq
			\kappa (\Phi^1-\underline{\Phi}) +T (\kappa c_4+b_1 )c_h^2.
		\end{equation*}
		Dividing both sides by $T$ proves \eqref{Uresidualcomplexity}.
	\end{proof}
 
	Theorems \ref{Theorem} and \ref{Theorem2} provide residual bounds for $V$ and $U_p$, respectively. Adding these bounds yields the following estimate for the combined residual.
	\begin{theorem}\label{Theorem3}
		Suppose that the conditions of Theorems~\ref{Theorem} and~\ref{Theorem2} hold. Define the overall stationarity residual as
		\begin{equation*}
			R^s = R_V^s + \frac{1}{m}\sum_{p=1}^{m} R_{U,p}^{s,\tau}.
		\end{equation*}	
		Then, 
		\begin{equation*}
			\frac{1}{T}\sum_{s=1}^{T}R^s
			\leq \frac{( \frac{1}{c_1}+\kappa )(\Phi^1-\underline{\Phi})}{T} + (\frac{c_4}{c_1}+\kappa c_4+b_1)c_h^2.
		\end{equation*}
	\end{theorem}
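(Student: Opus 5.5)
The plan is to add the two averaged residual bounds already proved. The quantity $\frac{1}{m}\sum_{p=1}^{m}R_{U,p}^{s,\tau}$ in the definition of $R^s$ is identified with $R_U^s=\frac1m\sum_{p=1}^m r_p^{s,\tau}$ from Theorem~\ref{Theorem2}. Then
\begin{equation*}
\frac1T\sum_{s=1}^T R^s=\frac1T\sum_{s=1}^T R_V^s+\frac1T\sum_{s=1}^T R_U^s .
\end{equation*}
Each term is then controlled by the corresponding theorem.

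For the $V$-part, I would not quote the bound of Theorem~\ref{Theorem} verbatim, since it is stated with $\Phi^1$ rather than $\Phi^1-\underline{\Phi}$. Instead I return to the summed descent inequality \eqref{masterLyapunovdescent}. Summing over $s=1,\dots,T$ and using $\Phi^{T+1}\ge\underline{\Phi}=\inf_s\Phi^s$ gives
\begin{equation*}
\sum_{s=1}^T(c_1R_V^s+c_2C_s+c_3A_s)\le \Phi^1-\underline{\Phi}+Tc_4c_h^2 .
\end{equation*}
Dropping the nonnegative terms $c_2C_s$ and $c_3A_s$ (recall $c_2,c_3>0$ under \eqref{final_parameter_conditions}) and dividing by $c_1T$ yields
\begin{equation*}
\frac1T\sum_{s=1}^T R_V^s\le \frac{\Phi^1-\underline{\Phi}}{c_1T}+\frac{c_4}{c_1}c_h^2 .
\end{equation*}
This is consistent with Theorem~\ref{Theorem}: since $\Phi^s\ge 0$ we have $\underline{\Phi}\ge0$, so this version is no weaker. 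It has exactly the form required in the final estimate.

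For the $U$-part, I invoke \eqref{Uresidualcomplexity} of Theorem~\ref{Theorem2} directly:
\begin{equation*}
\frac1T\sum_{s=1}^T R_U^s\le \frac{\kappa(\Phi^1-\underline{\Phi})}{T}+(\kappa c_4+b_1)c_h^2 .
\end{equation*}
Adding the two displays and collecting the coefficients of $(\Phi^1-\underline{\Phi})/T$ and of $c_h^2$ gives precisely $(\frac1{c_1}+\kappa)$ and $(\frac{c_4}{c_1}+\kappa c_4+b_1)$.

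There is no real obstacle. The only points needing care are the notational identification of $R_{U,p}^{s,\tau}$ with $r_p^{s,\tau}$, and using $\Phi^1-\underline{\Phi}$ consistently in both pieces rather than $\Phi^1$ in one and $\Phi^1-\underline{\Phi}$ in the other. The latter is handled by rederiving the $V$-bound from \eqref{masterLyapunovdescent} as above.
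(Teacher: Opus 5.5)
Your proposal is correct and follows the paper's proof: split $\frac1T\sum_s R^s$ into the $V$-average and the $U$-average, then add the bounds from Theorems~\ref{Theorem} and~\ref{Theorem2}. Re-deriving the $V$-bound from \eqref{masterLyapunovdescent} with $\Phi^{T+1}\ge\underline{\Phi}$ is worth doing. Theorem~\ref{Theorem} as stated carries $\Phi^1$ rather than $\Phi^1-\underline{\Phi}$, and the paper's one-line ``combining'' step passes over this.
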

	\begin{proof}
		By the definition of $R^s$, it has
		\begin{equation*}
			\frac{1}{T}\sum_{s=1}^{T}R^s = \frac{1}{T}\sum_{s=1}^{T}R_V^s + \frac{1}{T}\sum_{s=1}^{T}R_U^s.
		\end{equation*}
		Combining Theorems~\ref{Theorem} and~\ref{Theorem2} gives
		\begin{equation*}
			\frac{1}{T}\sum_{s=1}^{T}R^s
			\leq ( \frac{1}{c_1}+\kappa  ) \frac{\Phi^1-\underline{\Phi}}{T}
			+ ( \frac{c_4}{c_1} +\kappa c_4+b_1 )c_h^2.
		\end{equation*}
	\end{proof}
\begin{remark}	 
Theorem \ref{Theorem3} bounds the average combined residual by a term that decays as $O(1/T)$ for fixed parameters and a remaining error term. The latter depends on the step sizes, the number of local updates, and the proximal parameter through $c_4$, $c_1$, $\kappa $, and $b_1$.
To examine the controllability of the error term, fix $\tau$, $\eta_g$, and $\rho_1$, and set $\alpha=\tilde\eta/(\eta_g\tau)$. For sufficiently small $\tilde\eta$ satisfying the parameter conditions, the coefficients satisfy $c_1=O(\tilde\eta)$, $c_4=O(\tilde\eta^2)$, $\kappa=O(1)$, and $b_1=O(\tilde\eta^2)$. Provided that the relevant Lipschitz constants and the initial auxiliary function gap remain uniformly bounded, the estimate becomes
\begin{equation*}
	\frac1T\sum_{s=1}^{T}R^s
	\leq O(\frac1{T\tilde\eta})+O(\tilde\eta)c_h^2.
\end{equation*}
Thus, decreasing the effective global step size reduces the remaining error bound, and increasing the number of rounds reduces the initial-error contribution. The average residual bound tends to zero when $\tilde\eta\to0$ and $T\tilde\eta\to\infty$.
\end{remark}
	
	\section{Numerical Experiments}\label{sec4}
	This section evaluates the proposed algorithm on the synthetic dataset and real-word multi-view datasets, focusing on optimization behavior and clustering performance.
	\subsection{Synthetic Experiments}
	\subsubsection{Synthetic Data Generation}\label{sec41}
	It generates a partitioned dataset containing $m=4$ clients and $n=1200$ samples. The data matrix of the $p$th client is constructed as $ {X}_p =  {U}_p {V} + {E}_p,$ where \begin{equation*} {X}_p\in\mathbb{R}^{d_p\times n},d_p=20,~{U}_p\in\mathbb{R}_{+}^{d_p\times r},~r=6\end{equation*} is the nonnegative orthogonal local matrix, $ {V}\in\mathbb{R}_{+}^{r\times n}$ is the shared matrix, and $ {E}_p$ denotes additive Gaussian noise. To introduce heterogeneity, it generates a common nonnegative orthogonal local matrix ${U}^{0}_p$ and a random local matrix $\widetilde{ {U}}_p$ with the same column supports. The unnormalized local trix of the $p$th client is constructed as
	\begin{equation*}\overline{ {U}}_p = (1-h) {U}^{0}_p + h\widetilde{ {U}}_p,~h\in[0,1].\end{equation*}
	Thus, $h=0$ corresponds to a homogeneous local matrix, and a larger $h$ produces stronger local matrix heterogeneity.
	\subsubsection{Evaluation Metrics}\label{sec42}
To evaluate the convergence behavior of the proposed method, it reports the normalized average stationarity residual against the number of communication rounds, measuring progress toward stationarity.

	Motivated by Theorem~\ref{Theorem3}, it defines the stationarity residual at communication round $s$ as $ R^{s}$ Its cumulative average over the first $s$ communication rounds is defined by $ \overline{R}^{s} = \frac{1}{s}\sum_{t=1}^{s}R^{t}.$
	The normalized average stationarity residual is then defined as
	\begin{equation}
		\widehat{R}^{s} = \frac{\overline{R}^{s}}
		{\max \{ \overline{R}^{1},\varepsilon \}}
		= \frac{ \frac{1}{s}\sum_{t=1}^{s}R^{t}
		}{ \max \{R^{1},\varepsilon \} },
	\end{equation}
	where $\varepsilon>0$ is a small constant introduced to avoid division by 0. A smaller value of $\widehat{R}^{s}$ indicates that the iterates are, on average, closer to satisfying the stationarity condition. The initialization at $s=0$ is excluded from the calculation.

	\subsubsection{Experimental Results}\label{sec43}
	It examines the convergence behavior of the proposed method under four heterogeneity levels, $h\in\{0,0.3,0.6,0.9\}$, with the number of local updates chosen from $\tau\in\{5,10,20,30\}$. Fig.~\ref{taures} shows the normalized average stationarity residual against the number of communication rounds. The residual decreases as communication proceeds, indicating progress toward stationarity. Similar qualitative trends are observed across the differernt heterogeneity levels. These results provide empirical evidence of stable optimization behavior in the considered settings. 
	The number of local updates $\tau$ affects the progress achieved per communication round. Larger values of $\tau$ generally yield smaller normalized average stationarity residuals at the same communication round. This trend is observed across all four heterogeneity levels, suggesting that additional local updates improve communication efficiency in terms of stationarity residual reduction.
	
	\begin{figure*}[t]
		\centering
		\subfigcapskip=-1pt
		\subfigure[$h=0$]{
			\centering
			\includegraphics[width=6.3cm]{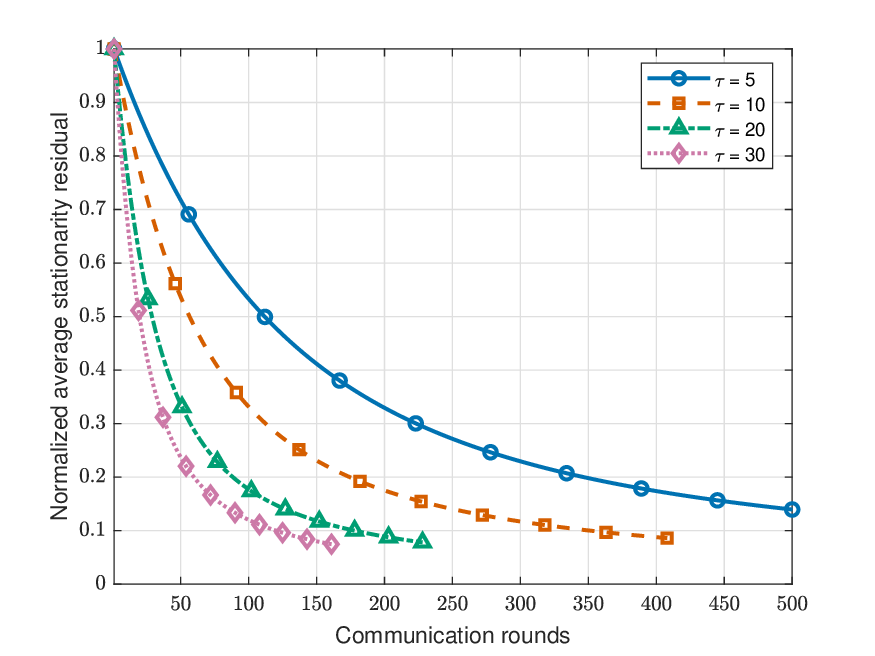}
		}\hspace{-2mm}
		\subfigcapskip=-1pt
		\subfigure[$h=0.3$]{
			\centering
			\includegraphics[width=6.3cm]{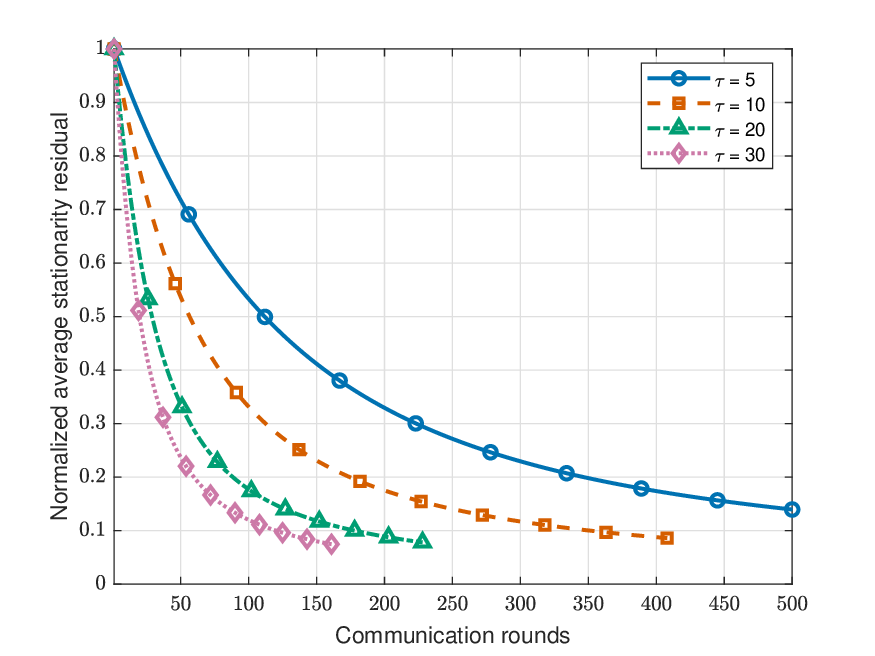}
		}\hspace{-2mm}
		\subfigcapskip=-1pt
		\subfigure[$h=0.6$]{
			\centering
			\includegraphics[width=6.3cm]{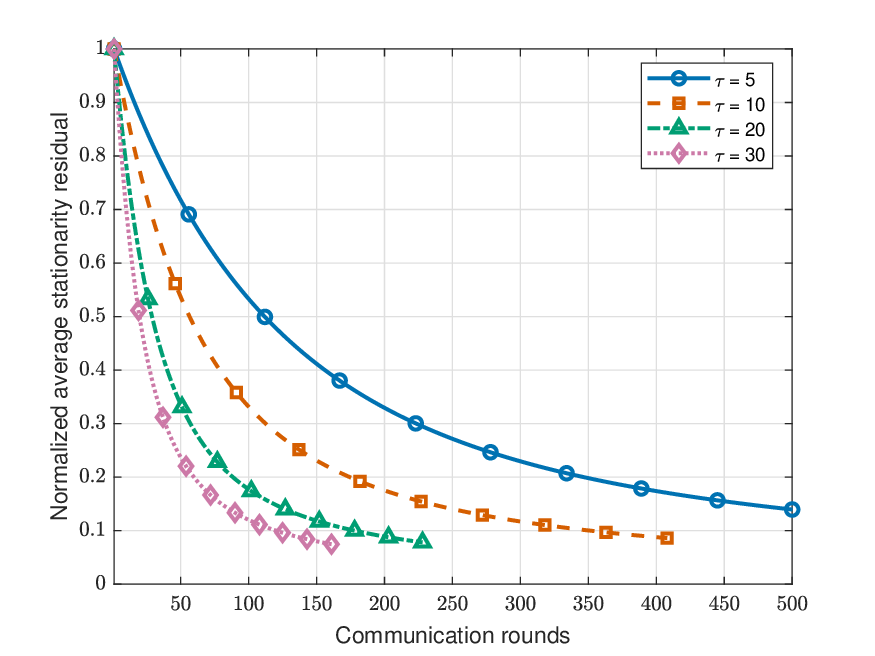}
		}\hspace{-2mm}
		\subfigcapskip=-1pt
		\subfigure[$h=0.9$]{
			\centering
			\includegraphics[width=6.3cm]{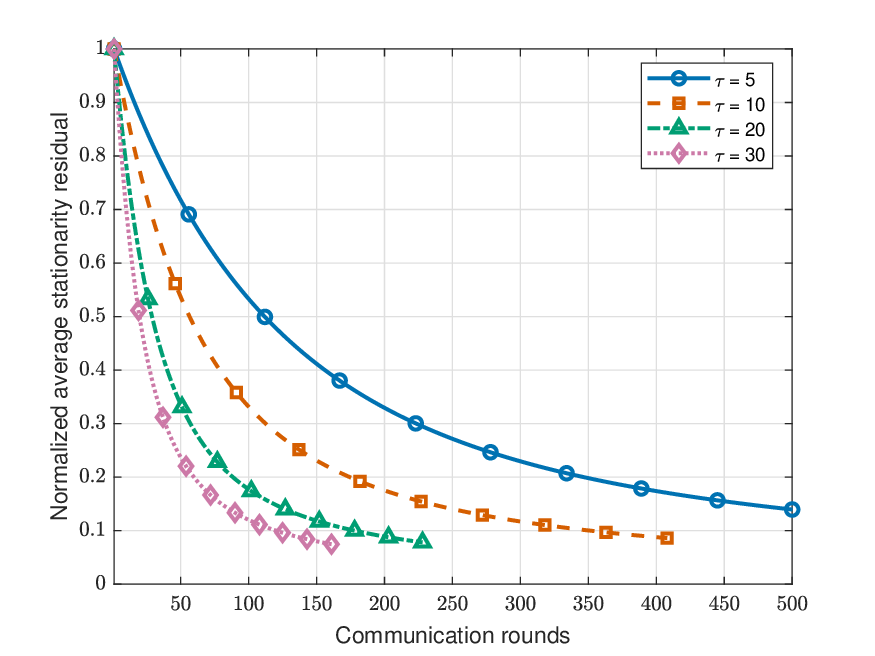}
		}\hspace{-2mm}
		\vspace{-0.1cm}
		\caption{Convergence of the normalized average stationarity residual for different numbers of local updates~$\tau$ under varying heterogeneity
			levels~$h$.}\label{taures}
	\end{figure*}
	For comparison on the same federated sparse ONMF model, it adapts three federated methods, FedMAvg \cite{wang2022federated}, FedMGS \cite{wang2022federated}, and FedVRA-U \cite{wang2023beyond}. Fig.~\ref{methres} compares the normalized average stationarity residuals of the proposed method and three adapted methods, namely Adapted FedMAvg, Adapted FedMGS, and Adapted FedVRA-U, under heterogeneity levels $h\in{0,0.3,0.6,0.9}$. Adapted FedMAvg increases initially and then remains nearly constant, while Adapted FedVRA-U increases sharply at first and then decreases gradually. The residual level at which Adapted FedMAvg stabilizes and the initial peak of Adapted FedVRA-U both increase with $h$. In contrast, the residuals of Adapted FedMGS and the proposed method decrease steadily as communication proceeds, with little change as heterogeneity increases. Both methods aggregate gradient information uploaded by clients before applying the proximal operator at the server, avoiding the need to interchange averaging and the proximal mapping for the nonsmooth regularizer. The enlarged views show that the proposed method consistently achieves lower normalized average residuals than Adapted FedMGS over the same range of communication rounds. This improvement is attributed to the gradient correction term, which effectively controls client drift.
	
	\begin{figure*}[t]
		\centering
		\subfigcapskip=-1pt
		\subfigure[$h=0$]{
			\centering
			\includegraphics[width=6.3cm]{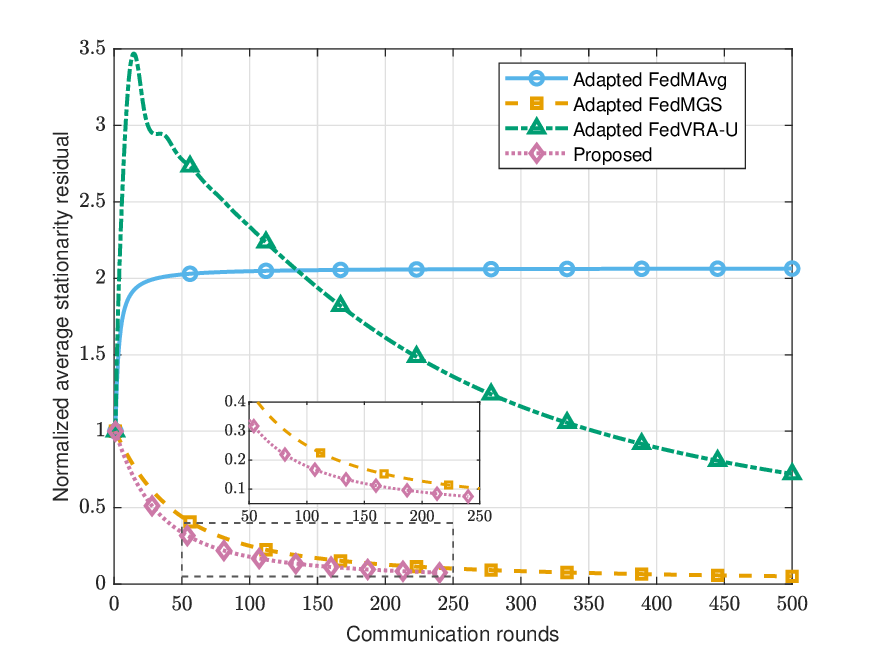}
		}\hspace{-2mm}
		\subfigcapskip=-1pt
		\subfigure[$h=0.3$]{
			\centering
			\includegraphics[width=6.3cm]{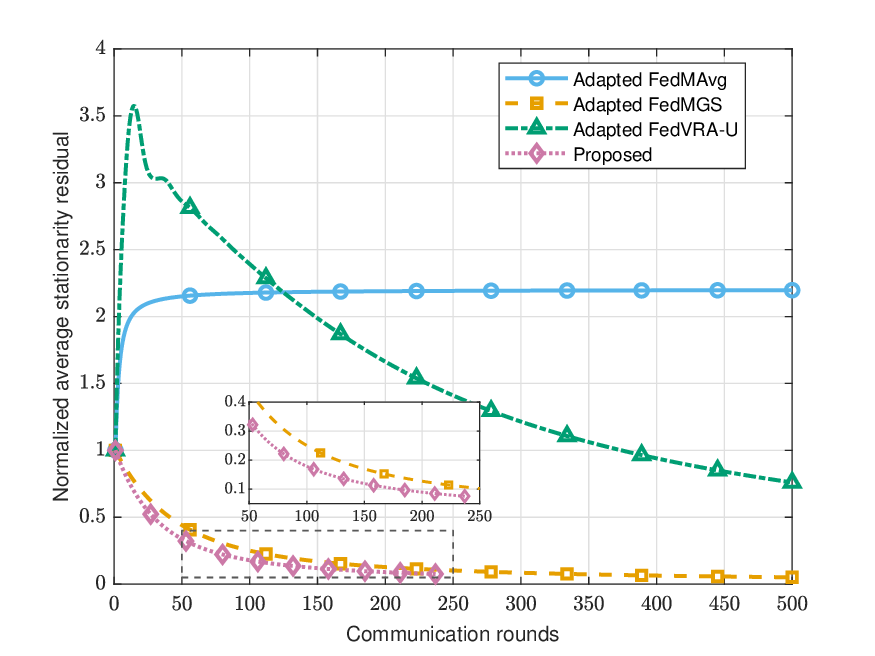}
		}\hspace{-2mm}
		\subfigcapskip=-1pt
		\subfigure[$h=0.6$]{
			\centering
			\includegraphics[width=6.3cm]{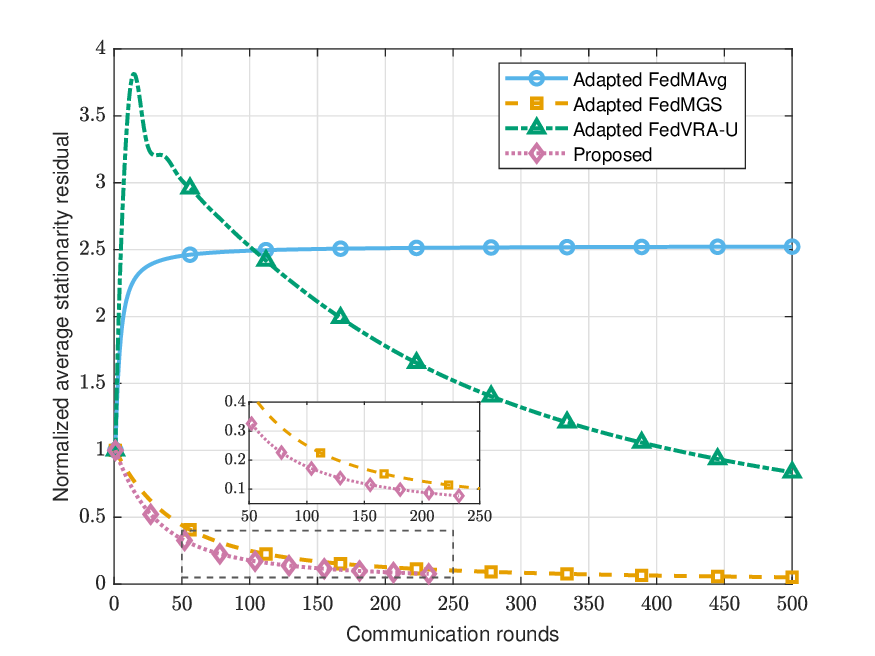}
		}\hspace{-2mm}
		\subfigcapskip=-1pt
		\subfigure[$h=0.9$]{
			\centering
			\includegraphics[width=6.3cm]{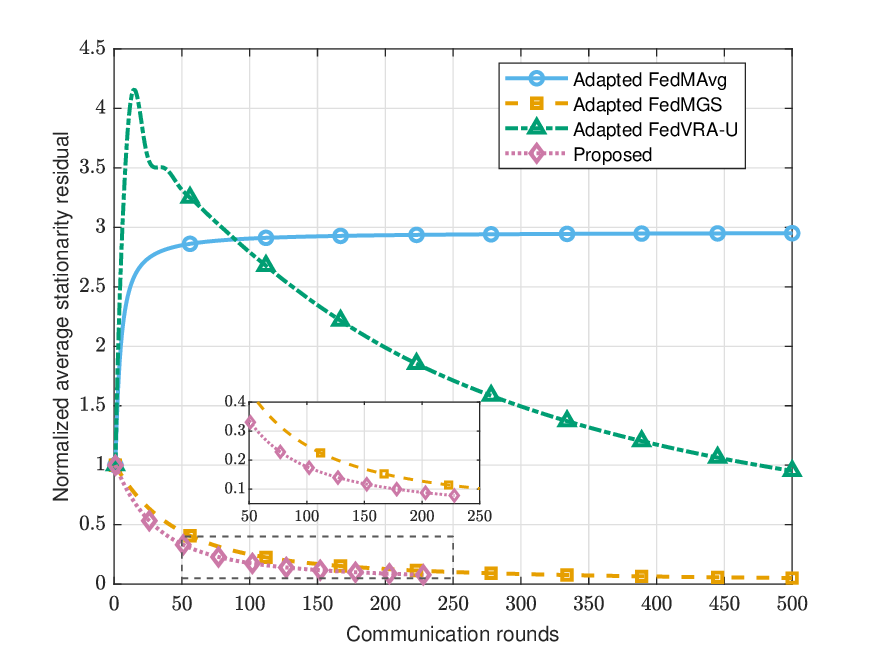}
		}\hspace{-2mm}
		\vspace{-0.1cm}
		\caption{Convergence of the normalized average stationarity residual for different federated optimization methods under varying heterogeneity levels~$h$.}\label{methres}
	\end{figure*}
	
	To evaluate the effectiveness of multiple local updates, it compares four variants, updating the shared variable $V$ at the server, updating local copies of $V$ with $\tau=1$, $\tau=10$, and the proposed method with drift correction and $\tau=10$. Fig.~\ref{abres} presents their normalized average stationarity residuals under four heterogeneity levels, $h\in\{0,0.3,0.6,0.9\} $. Fig.~\ref{abround} summarizes the communication rounds required to reach the target normalized average residual of $10^{-2}$. As shown in Fig.~\ref{abround}, the two variants with $\tau=10$ reach the target in fewer communication rounds than the server update and local copy ($\tau=1$) variants. These results demonstrate that performing multiple local updates substantially reduces the number of communication rounds needed to reach the target. Comparing the proposed method with the uncorrected local copy variant at $\tau=10$ highlights the contribution of drift correction. At $h=0$ and $h=0.3$, their residual trajectories are nearly indistinguishable. At higher heterogeneity levels, however, the proposed method achieves lower residuals over much of the displayed iteration range. At $h=0.6$ and $h=0.9$, drift correction reduces the required communication rounds from 25 to 12 and 14, corresponding to reductions of $52\%$ and $44\%$, respectively.

	\begin{figure*}[t]
		\centering
		\subfigcapskip=-1pt
		\subfigure[$h=0$]{
			\centering
			\includegraphics[width=6.3cm]{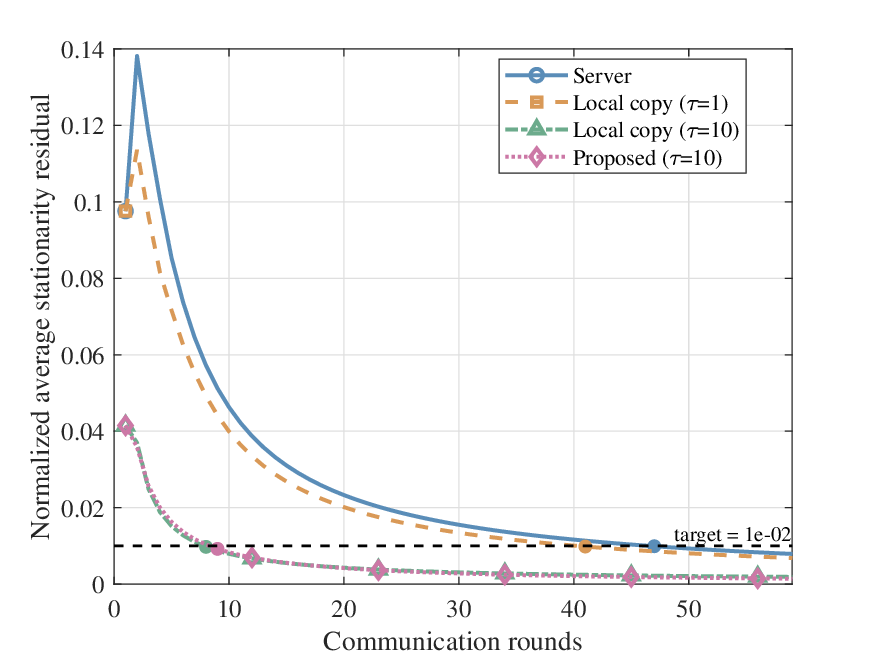}
		}\hspace{-2mm}
		\subfigcapskip=-1pt
		\subfigure[$h=0.3$]{
			\centering
			\includegraphics[width=6.3cm]{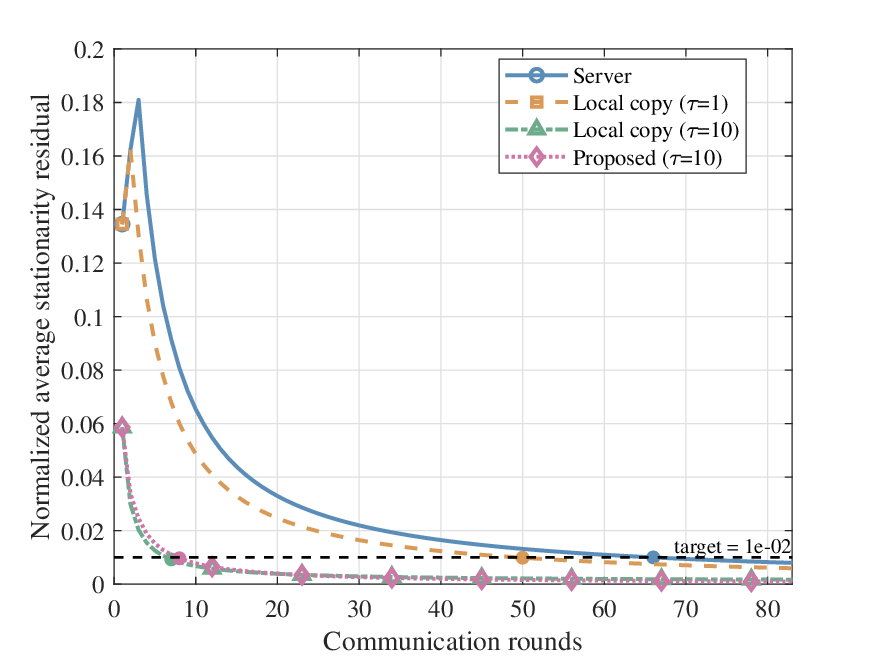}
		}\hspace{-2mm}
		\subfigcapskip=-1pt
		\subfigure[$h=0.6$]{
			\centering
			\includegraphics[width=6.3cm]{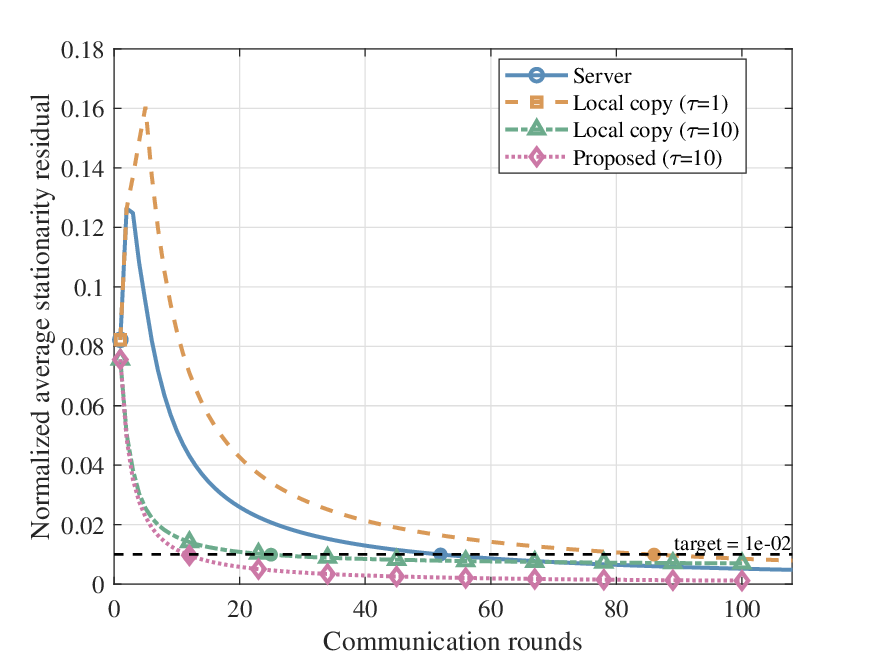}
		}\hspace{-2mm}
		\subfigcapskip=-1pt
		\subfigure[$h=0.9$]{
			\centering
			\includegraphics[width=6.3cm]{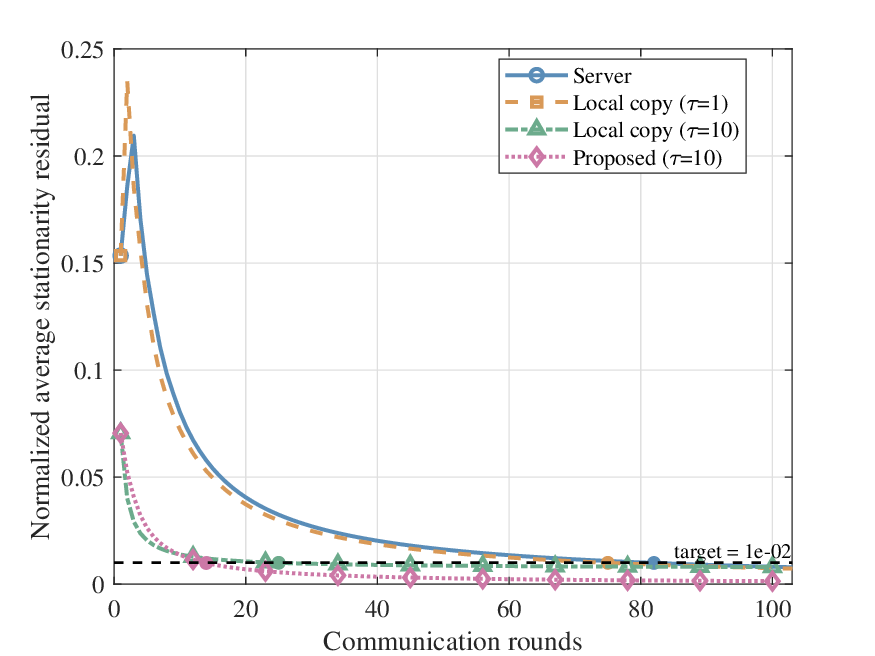}
		}\hspace{-2mm}
		\vspace{-0.1cm}
		\caption{Convergence of the normalized average stationarity residual for different
			ablation variants under varying heterogeneity levels~$h$.}\label{abres}
	\end{figure*}

	\begin{figure}
		\centering
		\includegraphics[scale=0.60]{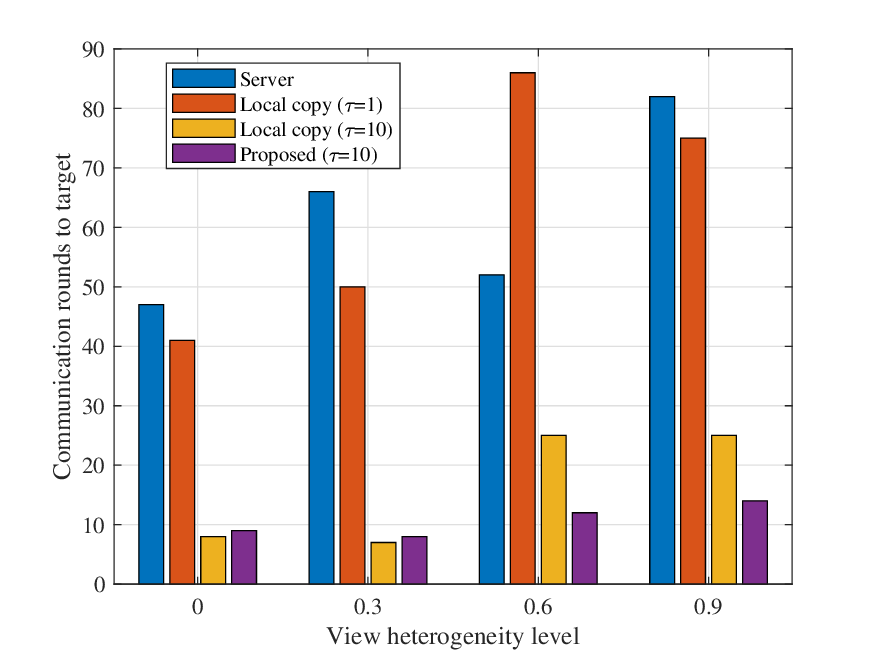} 
		\caption{Communication rounds required by different ablation variants to reach a normalized average stationarity residual of $10^{-2}$ under varying heterogeneity levels~$h$.}
		\label{abround}
	\end{figure}
    
Overall, the results on the synthetic dataset show consistent residual reduction across the diffierernt heterogeneity levels, with the proposed method attaining lower residuals than the adapted methods at comparable communication budgets. Multiple local updates reduce the communication rounds needed to reach the target residual, while drift correction provides additional gains under high heterogeneity.
    
	\subsection{Real Experiments}
	\subsubsection{Experimental Setup}
	
It evaluates the proposed method on four real-world datasets in a federated multi-view clustering setting, where each client holds one view and raw data remain local. The datasets comprise two text datasets, 3Sources\footnote{http://mlg.ucd.ie/datasets/3sources.html} and BBCSport\footnote{http://mlg.ucd.ie/datasets/bbc.html}, and two image datasets, Caltech101\footnote{https://data.caltech.edu/records/mzrjq-6wc02} and NUS-WIDE\footnote{https://github.com/ChuanbinZhang/Multi-view-datasets/blob/master/NUS-WIDE.mat}.  For these datasets, each view is normalized by its Frobenius norm. After obtaining the shared matrix $V\in\mathbb{R}^{r\times n}$, it applies $k$-means to $V^\top$. For $V$, $k$-means is performed with 10 replicates and a maximum of 300 iterations per replicate. Clustering performance is evaluated using clustering accuracy (ACC), normalized mutual information (NMI), and purity (Pur). It compares the proposed method with seven federated multi-view clustering methods, including FedMVL~\cite{huang2022efficient}, FedMAvg and FedMGS~\cite{wang2022federated}, PSFedML~\cite{li2024partially}, FNMF~\cite{dalleiger2025creating}, EFIMVC~\cite{liu2025icml}, and CeFMC~\cite{Liu2026TPAMI}. All methods are evaluates using the same input views and data preprocessing. It conductes 10 independent runs with different random seeds and report the average performance over these runs. The hyperparameters of each method are setting according to the corresponding publications.

	\subsubsection{Clustering Results}
	
	As shown in Table \ref{realres}, the proposed method achieves the highest ACC on three datasets and the highest NMI and Pur on all four datasets. The proposed method achieves the highest NMI and Pur on all four datasets and the highest ACC on three datasets, ranking second on the Caltech101. Specially, on the BBCSport, the proposed method achieves ACC, NMI, and Pur scores of $84.80\%$,$76.45\%$, and $86.00\%$, respectively. These scores exceed the strongest baseline for each metric by $3.92\%$, $10.69\%$, and $3.90\%$, respectively. These results demonstrate the competitive clustering performance of the proposed method across the tested datasets.
	
	\begin{table}[t]
		\caption{Clustering performance (\%) on real-world multi-view datasets.}
		\label{realres}
		
		\scriptsize
		\setlength{\tabcolsep}{2.8pt}
		\renewcommand{\arraystretch}{1.10}
		
		\begin{tabular}{@{}l*{12}{c}@{}}
			\toprule
			& \multicolumn{3}{c}{3Sources}
			& \multicolumn{3}{c}{BBCSport}
			& \multicolumn{3}{c}{Caltech101 }
			& \multicolumn{3}{c}{NUS-WIDE}\\
			
			\cmidrule(lr){2-4}
			\cmidrule(lr){5-7}
			\cmidrule(lr){8-10}
			\cmidrule(lr){11-13}
			
			Method
			& ACC & NMI & Pur
			& ACC & NMI & Pur
			& ACC & NMI & Pur
			& ACC & NMI & Pur\\
			\midrule
			
			FedMVL~\cite{huang2022efficient}
			& \underline{54.50} & \underline{50.32} & \underline{70.24}
			& 66.42 & 53.94 & 73.29
			& 23.28 & 9.33 & 65.07
			& 39.61 & 25.77 & 40.21\\
				
			FedMAvg~\cite{wang2022federated} 
			& 41.60 & 29.41 & 56.86
			& 70.50 & 61.17 & 76.49
			& 46.74 & 25.79 & 76.36
			& 45.79 & 34.94 & 47.80\\
			
			FedMGS~\cite{wang2022federated} 
			& 41.78 & 30.73 & 53.73
			& 73.97 & 55.45 & 74.52
			& 42.75 & 26.93 & 75.72
			& 52.28 & 39.54 & 54.30\\
			
			PSFedML~\cite{li2024partially} 
			& 39.05 & 16.18 & 40.53
			& 38.64 & 10.83 & 42.00
			& 39.94 & 21.74 & 71.24
			& 22.44 & 20.32 & 32.86\\
			
			FNMF~\cite{dalleiger2025creating} 
			& 44.38 & 33.62 & 58.97
			& 27.44 & 3.16 & 36.87
			& \textbf{51.95} & 2.55 & 54.42
			& \underline{54.98} & 41.03 & \underline{57.45}\\
			
			EFIMVC~\cite{liu2025icml} 
			& 52.90 & 47.56 & 67.87
			& \underline{80.88} & \underline{65.76} & \underline{82.10}
			& 45.55 & \underline{45.55} & 80.76
			& 54.02 & \underline{41.46} & 55.66\\
			
			CeFMC~\cite{Liu2026TPAMI} 
			& 53.11 & 44.71 & 68.34
			& 40.57 & 8.48 & 41.19
			& 47.63 & 40.43 & \underline{80.88}
			& 54.84 & 0.85 & 57.09\\
			
			Proposed 
			& \textbf{56.90} & \textbf{56.90} & \textbf{72.58}
			& \textbf{84.80} & \textbf{76.45} & \textbf{86.00}
			& \underline{49.48} & \textbf{50.40} & \textbf{87.51}
			& \textbf{56.35} & \textbf{43.34} & \textbf{57.52}\\
			
			\bottomrule
		\end{tabular}
		\footnotetext{The best and second best results are highlighted in bold and
			underlined, respectively.}
	\end{table}

	\section{Conclusion}\label{sec5}
	
	Client drift is a central challenge in federated sparse ONMF, as multiple local updates can cause client optimization trajectories to deviate from the global trajectory. In this paper, we develop a federated alternating minimization algorithm that addresses this challenge through two techniques. On the one hand, by using variable splitting, we have decoupled communication from the proximal mapping to address the noncommutativity of averaging and the proximal mapping. On the other hand, we have constructed a gradient correction term from the difference between the global and local average gradients in the previous round to control client drift during multiple local updates. Our convergence analysis has explicitly accounted for client drift and local matrix variation. Under the stated assumptions and suitable parameter conditions, we have established an upper bound on the averaged stationarity residual consisting of a decaying $O(1/T)$ term and a bounded error term. Numerical examples on synthetic and real-world datasets under different settings have demonstrated the effectiveness of the proposed method.

    There remain several important directions for future work. Firstly, incorporating stochastic gradient updates could reduce the computational cost of the proposed method. Secondly, further theoretical analysis could clarify how the maximum number of local updates affects convergence. Finally, extending the method to decentralized federated settings would broaden its applicability.
	
	\backmatter
		\bmhead{Acknowledgements}
	
This work was supported in part by the National Natural Science Foundation of China under Grants 12371306 and 12271309, in part by the Department of Science and Technology of Guangdong Province under Grant 2021CX02G450, and in part by the Postdoctoral Fellowship Program (Grade C) of China Postdoctoral Science Foundation under Grant GZC20261682.
	
		\bibliography{sn-bibliography,mybibfile}

	\begin{appendices}
		
		\section{The proof of Lemma 2}\label{secA1}
		
		
		\begin{proof}
			
			If $\tau=1$, then $V_{p}^{s,k}=V^{s}$.
            
			If $\tau\ge 2$,\begin{equation}\label{preVac}
				\begin{aligned}
					\widehat{V}_p^{s,k+1}	&= \widehat{V}_p^{s,k} - \alpha ( \nabla_{V_p} f_p(U_p^{s,k+1},V_p^{s,k}) + C_p^{s} )\\
					&=\widehat{V}_p^{s,0}- \alpha \sum_{l=0}^{k} (\nabla_{V_p} f_p(U_p^{s,l+1},V_p^{s,l}) + C_p^{s}).
				\end{aligned}
			\end{equation}
			Since $\widehat{V}_p^{s,0}=\mathcal{P}_{\tilde{\eta}\mu} (\overline {V}^{s})=V^{s} $ and ${V}_p^{s,k+1}=\mathcal{P}_{(k+1)\alpha\mu}(\widehat{V}_p^{s,k+1})$, applying \eqref{preVac} it has
			\begin{equation}
				\begin{aligned}\label{A9}
					\|V_{p}^{s,k+1}-V^{s}\|_{F}^{2} &=\|\mathcal{P}_{(k+1)\alpha\mu}(\widehat{V}_p^{s,k+1})-\mathcal{P}_{\tilde{\eta}\mu}(\overline {V}^{s})\|_{F}^{2}\\
					&=\|\mathcal{P}_{(k+1)\alpha\mu}(V^s- \alpha \sum_{l=0}^{k} (\nabla_V f_p(U_p^{s,l+1},V_p^{s,l})+ C_p^{s}))-V^s\|_{F}^{2}.
				\end{aligned}
			\end{equation}
			Define $H_p^s=\nabla_V f_p(U_p^{s,0},V^s),H^s=\frac{1}{m}\sum_{p=1}^{m}H_p^s.$
			The right side of \eqref{A9} can be interpreted as an inexact projected gradient descent, executed from the starting point $V^{s}$ using the inexact gradient $- \alpha \sum_{l=0}^{k} (\nabla_V f_p(U_p^{s,l+1},V_p^{s,l})+ C_p^{s})$. This inspires us to bound \eqref{A9} using the deviation from the exact projected gradient descent step with the same initial point $\widetilde{V}_{pgd}=\mathcal{P}_{(k+1){\eta}}(V^{s}-(k+1)\alpha H^s)$.
			In this way, 
			\begin{equation}\label{A.10}
				\begin{aligned}
					&\quad\|V_{p}^{s,k+1}-V^s\|_{F}^{2}\\
					&=\|\mathcal{P}_{(k+1)\alpha\mu}(V^s- \alpha \sum_{l=0}^{k} (\nabla_{V_p} f_p(U_p^{s,l+1},V_p^{s,l}) +C_p^{s}))-\widetilde{V}_{pgd}+\widetilde{V}_{pgd}-{V}^{s}\|_{F}^{2}\\
					&\le \underbrace{2\|\mathcal{P}_{(k+1)\alpha\mu}(V^{s}- \alpha \sum_{l=0}^{k} (\nabla_{V_p} f_p(U_p^{s,l+1},V_p^{s,l})+ C_p^{s}))-\widetilde{V}_{pgd}\|_{F}}_{(\text{III})} +\underbrace{2\|\widetilde{V}_{pgd}-{V}^{s}\|_{F}^{2}}_{(\text{IV})}.
				\end{aligned}
			\end{equation}
			Since 
			\begin{equation*}
				\begin{aligned}\widetilde{V}_{pgd}= \arg\min_{Z}\frac{1}{2}\|V^s-(k+1)\alpha H^s-Z\|_{F}^{2}+(k+1)\alpha h(Z),\end{aligned}
			\end{equation*} so $\widetilde{V}_{pgd}$ satisfies the optimality condition\begin{equation*}
				\begin{aligned}(k+1)\alpha(\tilde{\nabla}h(\widetilde{V}_{pgd})+H^s)+\widetilde{V}_{pgd}-\mathcal{P}_{\tilde{\eta}\mu} (\overline {V}^{s})=0,\end{aligned}
			\end{equation*} for some $\tilde{\nabla}h(\widetilde{V}_{pgd})\in\partial h(\widetilde{V}_{pgd})$. This implies that
			\begin{equation*}
				\begin{aligned}
					\widetilde{V}_{pgd}-V^s= -(k+1)\alpha(\tilde{\nabla}h(\widetilde{V}_{pgd})+H^s).
				\end{aligned}
			\end{equation*}
			Note that the server step size is $\tilde{\eta}$ and not $(k+1)\alpha$. To correct for this, it uses the iterate $\widetilde{V}^{s+1}$ defined in \eqref{virtual}, which is the same as $\widetilde{V}_{pgd}$ except that the step size $(k+1)\alpha$ is replaced by $\tilde{\eta}$. Following similar steps as above, the optimality condition for $\widetilde{V}^{s+1}$ implies that
			\begin{equation*}
				\begin{aligned}
					\widetilde{V}^{s+1}-V^s= -\tilde{\eta}(\tilde{\nabla}h(\widetilde{V}^{s+1})+H^s),
				\end{aligned}
			\end{equation*}
			for some $\tilde{\nabla}h(\widetilde{V}^{s+1})\in\partial h(\widetilde{V}^{s+1})$. For (\text{IV}), according to Assumption \ref{assumption3}, it has
			\begin{equation*}
				\begin{aligned}
					(\text{IV})&=2\|(k+1)\alpha(\tilde{\nabla}h(\widetilde{V}_{pgd})+H^s)\|_{F}^{2}\\
					&=2\|(k+1)\alpha(\tilde{\nabla}h(\widetilde{V}_{pgd})+H^s-\tilde{\nabla}h(\widetilde{V}^{s+1})+H^s) +(k+1)\alpha(\tilde{\nabla}h(\widetilde{V}^{s+1})+H^s)\|_{F}^{2}\\
					&=2\|(k+1)\alpha(\tilde{\nabla}h(\widetilde{V}_{pgd})-\tilde{\nabla}h(\widetilde{V}^{s+1}))+(k+1)\alpha(\tilde{\nabla}h(\widetilde{V}^{s+1})+H^s)\|_{F}^{2}\\
					&\le 4\tau^{2}\alpha^{2} 4B_{g}^{2}+4\tau^{2}\alpha^{2}\|\tilde{\nabla}h(\widetilde{V}^{s+1})+H^s\|_{F}^{2}.
				\end{aligned}
			\end{equation*}
			By the definition of $\widetilde{V}_{pgd}$ and the nonexpansiveness of the proximal operator, it has
			\begin{equation*}
				\begin{aligned}
					(\text{III})&=2\|\mathcal{P}_{(k+1)\alpha\mu}(V^s- \alpha \sum_{l=0}^{k} (\nabla_{V_p} f_p(U_p^{s,l+1},V_p^{s,l})+ C_p^{s}))-\mathcal{P}_{(k+1){\alpha}}(V^s-(k+1)\alpha H^s)\|_{F}^{2}\\
					&\le 2\|\alpha \sum_{l=0}^{k} (\nabla_{V_p} f_p(U_p^{s,l+1},V_p^{s,l})+ C_p^{s})-(k+1)\alpha H^s\|_{F}^{2}\\
					&\le 2\alpha^2\| \sum_{l=0}^{k} (\nabla_{V_p} f_p(U_p^{s,l+1},V_p^{s,l})+ C_p^{s}-H^s)\|_{F}^{2}.\\
				\end{aligned}
			\end{equation*}
			According to the definition of $\Lambda_p^s$ and $\Lambda_{a}^s$, the correction term satisfies
			\begin{equation*}\label{correction_lambda_relation}
				H_p^s+C_p^s-H^s=	\frac{1}{\alpha\tau}
				(\Lambda_p^s-\Lambda_{a}^s).
			\end{equation*}
			It has
			\begin{equation*}
				\begin{aligned}
					(\text{III})&\le  2\alpha^2\| \sum_{l=0}^{k} (\nabla_{V_p} f_p(U_p^{s,l+1},V_p^{s,l})-h_{p}^s+h_{p}^s+ C_p^{s}-H^s)\|_{F}^{2}\\
					&=2\alpha^2\| \sum_{l=0}^{k} (\nabla_{V_p} f_p(U_p^{s,l+1},V_p^{s,l})-h_{p}^s)+(k+1)(h_{p}^s+ C_p^{s}-H^s)\|_{F}^{2}\\
					&\le 4\alpha^2\| \sum_{l=0}^{k} (\nabla_{V_p} f_p(U_p^{s,l+1},V_p^{s,l})-h_{p}^s)\|_{F}^{2}+4	\alpha^2\frac{(k+1)^{2}}{\alpha^2\tau^2}
					\|\Lambda_p^s-\Lambda_{a}^s\|_{F}^2.
				\end{aligned}
			\end{equation*}
			The first term contains both the $V$ drift and the $U$ drift. Specifically,
			\begin{equation*}\label{gradient_drift_decomposition}
				\begin{aligned}
					&\quad\nabla_{V_p} f_p(U_p^{s,l+1},V_p^{s,l})-H_p^s\\
					&=\underbrace{\nabla_{V_p} f_p(U_p^{s,l+1},V_p^{s,l})-\nabla_V f_p(U_p^{s,l+1},V^s)}_{\text{$V$ drift}}\\
                    &+\underbrace{\nabla_V f_p(U_p^{s,l+1},V^s)-\nabla_V f_p(U_p^{s,0},V^s)}_{\text{$U$ drift}}.
				\end{aligned}
			\end{equation*}
			Using the Lipschitz continuity gives
			\begin{equation*}\label{gradient_drift_bound}
				\begin{aligned}
					\|\nabla_{V_p} f_p(U_p^{s,l+1},V_p^{s,l})-H_p^s\|_F^2\leq 2L_2^2\|V_p^{s,l}-V^s\|_F^2+2L_{3,p}^2\|U_p^{s,l+1}-U_p^{s,0}\|_F^2.
				\end{aligned}
			\end{equation*}
			
			Using $\|\sum_{l=0}^{k}A_l\|_F^2\leq(k+1)\sum_{l=0}^{k}\|A_l\|_F^2,$ it obtains
			\begin{equation*}\label{modified_term_I_bound}
				\begin{aligned}
					(\text{III})\leq &8(k+1)\alpha^2L_2^2\sum_{l=0}^{k}\|V_p^{s,l}-V^s\|_F^2\\
					&+8(k+1)\alpha^2L_{3,p}^2\sum_{l=0}^{k}\|U_p^{s,l+1}-U_p^{s,0}\|_F^2 +4(\frac{k+1}{\tau})^2\|\Lambda_p^s-\Lambda_{a}^s\|_F^2.
				\end{aligned}
			\end{equation*}
			Now, plugging (\text{III}) and (\text{IV}) into \eqref{A.10} yields
			
			\begin{equation}\label{A.19}
				\begin{aligned}
					\|V_{p}^{s,k+1}-V^s\|_{F}^{2}
					&\le 8(k+1)\alpha^2L_2^2\sum_{l=0}^{k}\|V_p^{s,l}-V^s\|_F^2+	4\|\Lambda_p^s-\Lambda_{a}^s\|_F^2+16\tau^2\alpha^2c_h^2	\\
					&+ 4\tau^2\alpha^2\|\widetilde\nabla h(\widetilde V^{s+1})+H^s\|_F^2 +8(k+1)\alpha^2L_{3,p}^2\sum_{l=0}^{k}\|U_p^{s,l+1}-U_p^{s,0}\|_F^2.
				\end{aligned}
			\end{equation}
			To get a recursion for the drift error from \eqref{A.19}, let \begin{equation*}
				\begin{aligned}B^{s}&=4\|\Lambda_p^s-\Lambda_{a}^s\|_F^2+16\tau^2\alpha^2c_h^2+
			4\tau^2\alpha^2\|\widetilde\nabla g(\widetilde V^{s+1})+H^s\|_F^2,\\
            ~E_{U}^{k}&=8(k+1)\alpha^2L_{3,p}^2
			\sum_{l=0}^{k}\|U_p^{s,l+1}-U_p^{s,0}\|_F^2,\end{aligned}
			\end{equation*} and define $\mathcal{S}_{p,k}^{s}=\sum_{l=0}^{k}\|V_p^{s,l}-V^s\|_{F}^{2}$.
			\begin{equation*}
				\begin{aligned}
					\mathcal{S}_{p,k+1}^{s}-\mathcal{S}_{p,k}^{s}\le  8\tau\alpha^{2}L_{V}^{2}\mathcal{S}_{p,k}^{s}+B^{s}+E_{U}^{k}.
				\end{aligned}
			\end{equation*}
			Under the step-size condition $\alpha\tau L_2\leq\frac{1}{\sqrt{20}}$, it has $8\tau\alpha^2L_2^2\leq\frac{2}{5\tau}.$ Consequently, $(1+8\tau\alpha^2L_2^2)^\tau\le\exp (\frac{2}{5} )\approx 1.492$,
			it concludes that
			\begin{equation}\label{A.21}
				\begin{aligned}
					\mathcal S_{p,\tau-1}^s\leq1.492\tau A_p^s+1.492\sum_{k=0}^{\tau-2}E_{U}^{k}.
				\end{aligned}
			\end{equation}
			Summing \eqref{A.21} over all the client, it has	
			\begin{equation*}\label{A.211}
				\begin{aligned}
					\sum_{k=0}^{\tau-1}\sum_{p=1}^{m}\|V_p^{s,k}-V^s\|_F^2 
					&\leq 6\tau \|\Lambda^s-\overline{\Lambda}^s\|_F^2 + 24m\tau^3\alpha^2c_h^2 \\
					&+ 6m\tau^3\alpha^2 \| \widetilde{\nabla}h(\widetilde V^{s+1}) + \nabla_Vf(\{U_p^{s,0}\},V^s) \|_F^2 \\
					&+ 12\alpha^2\tau^4 \sum_{p=1}^{m}\sum_{k=0}^{\tau-1} L_{3,p}^2 \|\Delta U_p^{s,k}\|_F^2.
				\end{aligned}
			\end{equation*}
		\end{proof}
	
		\section{The proof of Lemma 3}\label{secA2}
	
	\begin{proof}
		
		By the definition of $\Lambda_{p}^{s+1}$ and $\Lambda_{a}^{s+1}$, it has
		\begin{equation}\label{Lambda_direct_variance_bound}
			\begin{aligned}
				\frac{1}{m}\|\Lambda^{s+1}-\overline{\Lambda}^{s+1}\|_{F}^{2}\le\frac{\alpha^2\tau}{m}\sum_{p=1}^{m}\sum_{k=0}^{\tau-1}\|\nabla_V f_p(U_p^{s+1,0},V^{s+1})-\nabla_{V_p} f_p(U_p^{s,k+1},V_p^{s,k})\|_F^2,
			\end{aligned}
		\end{equation}
		where the inequality follows from the fact that $\sum_{p=1}^{m}\|Y_p-\frac{1}{m}\sum_{q=1}^{m}Y_q\|_F^2\leq\sum_{p=1}^{m}\|Y_p\|_F^2$ and $\|\sum_{k=0}^{\tau-1}Y_k\|_F^2\leq\tau\sum_{k=0}^{\tau-1}\|Y_k\|_F^2$ and
		\begin{equation*}
			\begin{aligned}
				\Lambda_p^{s+1}-\Lambda_{a}^{s+1} 
				&=\alpha\sum_{k=0}^{\tau-1}(\nabla_V f_p(U_p^{s+1,0},V^{s+1})-
				\nabla_{V_p} f_p(U_p^{s,k+1},V_p^{s,k})\\
				&-\frac{1}{m}\sum_{p=1}^{m}(\nabla_V f_p(U_p^{s+1,0},V^{s+1})-\nabla_{V_p} f_p(U_p^{s,k+1},V_p^{s,k}))).
			\end{aligned}
		\end{equation*}
		
		For every client $p$ and local iteration $k$, adding and subtracting $\nabla_V f_p(U_p^{s+1,0},V^s)$ and $\nabla_V f_p(U_p^{s,k+1},V^s)$ gives
		\begin{equation*}
			\begin{aligned}
				&\quad \nabla_V f_p(U_p^{s+1,0},V^{s+1})-\nabla_{V_p} f_p(U_p^{s,k+1},V_p^{s,k})\\
				&=\nabla_V f_p(U_p^{s+1,0},V^{s+1})-\nabla_V f_p(U_p^{s+1,0},V^s)+\nabla_V f_p(U_p^{s+1,0},V^s)\\
				&-\nabla_V f_p(U_p^{s,k+1},V^s)+\nabla_V f_p(U_p^{s,k+1},V^s)-\nabla_{V_p} f_p(U_p^{s,k+1},V_p^{s,k}). 
			\end{aligned}
		\end{equation*}
		
		Using the Lipschitz continuity and the inequality $\|x+y+z\|_F^2\leq2\|x\|_F^2+4\|y\|_F^2+4\|z\|_F^2,$ it obtains
		\begin{equation}\label{direct_gradient_difference_bound}
			\begin{aligned}
				&\quad\|\nabla_V f_p(U_p^{s+1,0},V^{s+1})-\nabla_{V_p} f_p(U_p^{s,k+1},V_p^{s,k})\|_F^2\\
				&\leq2L_2^2\|V^{s+1}-V^s\|_F^2+4L_2^2\|V_p^{s,k}-V^s\|_F^2 +4L_{3,p}^2\|U_p^{s+1,0}-U_p^{s,k+1}\|_F^2.
			\end{aligned}
		\end{equation}
		
		Under the condition $U_p^{s+1,0}=U_p^{s,\tau}$, it follows that
		\begin{equation*}
			\begin{aligned}
				U_p^{s+1,0}-U_p^{s,k+1}=\sum_{l=k+1}^{\tau-1}(U_p^{s,l+1}-U_p^{s,l}).
			\end{aligned}
		\end{equation*}
		
		By the Cauchy-Schwarz inequality,
		\begin{equation}\label{direct_terminal_U_bound}
			\begin{aligned}
				\sum_{k=0}^{\tau-1}\|U_p^{s+1,0}-U_p^{s,k+1}\|_F^2 	&\leq\sum_{k=0}^{\tau-1}(\tau-k-1)\sum_{l=k+1}^{\tau-1}\|U_p^{s,l+1}-U_p^{s,l}\|_F^2\\
				&\leq\frac{\tau(\tau-1)}{2}\sum_{l=0}^{\tau-1}\|U_p^{s,l+1}-U_p^{s,l}\|_F^2.
			\end{aligned}
		\end{equation}
		
		Substituting \eqref{direct_gradient_difference_bound} and \eqref{direct_terminal_U_bound} into \eqref{Lambda_direct_variance_bound} yields
		\begin{equation*}\label{Lambda_recursion_compact_direct}
			\begin{aligned}
				&\quad\frac{1}{m}\|\Lambda^{s+1}-\overline{\Lambda}^{s+1}\|_F^2-2\alpha^2\tau^2L_2^2\|V^{s+1}-V^s\|_F^2\\
				&\leq\frac{4\alpha^2\tau L_2^2}{m}\sum_{p=1}^{m}\sum_{k=0}^{\tau-1}\|V_p^{s,k}-V^s\|_F^2 +\frac{2\alpha^2\tau^2(\tau-1)}{m}\sum_{p=1}^{m}\sum_{k=0}^{\tau-1}L_{3,p}^2\|\Delta U_p^{s,k}\|_F^2.
			\end{aligned}
		\end{equation*}
		
		Applying Lemma~\ref{LeA.12} to the local $V$ drift term gives
		\begin{equation*}\label{Lambda_recursion_expanded_direct}
			\begin{aligned}
				&\quad\frac{1}{m}\|\Lambda^{s+1}-\overline{\Lambda}^{s+1}\|_F^2-2\alpha^2\tau^2L_2^2\|V^{s+1}-V^s\|_F^2\\
				&\leq\frac{24\alpha^2\tau^2L_2^2}{m}\|\Lambda^s-\overline{\Lambda}^s\|_F^2 +24\alpha^4\tau^4L_2^2\|\widetilde{\nabla}h(\widetilde V^{s+1})+\nabla_V f(\{U_p^{s,0}\},V^s)\|_F^2\\
				&+96\alpha^4\tau^4L_2^2c_h^2+\frac{2\alpha^2\tau^2(\tau-1)+48\alpha^4\tau^5L_2^2}{m}\sum_{p=1}^{m}\sum_{k=0}^{\tau-1}L_{3,p}^2\|\Delta U_p^{s,k}\|_F^2.
			\end{aligned}
		\end{equation*} The proof is complete.
	\end{proof}
	
		\section{The proof of Lemma 4}\label{secA3}
	
	\begin{proof}
		Under Assumption \ref{assumption2}, it ensure that $U_p^{s,k+1}$ is a global minimizer of subproblem~\eqref{Um1}. Since $U_p^{s,k}\in\mathcal O_p$ is feasible for this subproblem and the proximal term vanishes at $U_p^{s,k}$, it has
		\begin{equation}
			\begin{aligned}
				&f_p(U_p^{s,k+1},V_p^{s,k}) +\frac{\rho_1}{2} \|\Delta U_p^{s,k}\|_F^2\leq f_p(U_p^{s,k},V_p^{s,k}).
			\end{aligned}
		\end{equation}
		Therefore,
		\begin{equation}\label{U_step_descent_difference}
			\begin{aligned}
				&f_p(U_p^{s,k+1},V_p^{s,k}) - f_p(U_p^{s,k},V_p^{s,k})\leq -\frac{\rho_1}{2} \|\Delta U_p^{s,k}\|_F^2.
			\end{aligned}
		\end{equation}
		Next, 
		\begin{equation}\label{U_cross_eval_decomposition}
			\begin{aligned}
				&\quad f_p(U_p^{s,k+1},V^{s+1}) - f_p(U_p^{s,k},V^{s+1}) \\
				&= f_p(U_p^{s,k+1},V_p^{s,k}) - f_p(U_p^{s,k},V_p^{s,k})+( f_p(U_p^{s,k+1},V^{s+1}) \\
				& - f_p(U_p^{s,k+1},V_p^{s,k}))-( f_p(U_p^{s,k},V^{s+1}) - f_p(U_p^{s,k},V_p^{s,k})).
			\end{aligned}
		\end{equation}
		Let $J_p(U) = f_p(U,V^{s+1})-f_p(U,V_p^{s,k}).$ Then the last two bracketed terms of \eqref{U_cross_eval_decomposition} are $J_p(U_p^{s,k+1})-J_p(U_p^{s,k})$. Since the feasible set induced by $U_p\geq 0$ and $U_p^\top U_p=I$ is not convex, the Euclidean line segment between
		$U_p^{s,k}$ and $U_p^{s,k+1}$ may not remain feasible. Hence, it uses the integral form along the Euclidean segment. Since $f_p$ is continuously differentiable in an open neighborhood containing this segment, it has
		\begin{equation*}
			\begin{aligned}
				&J_p(U_p^{s,k+1})-J_p(U_p^{s,k}) = \int_0^1 \langle \nabla_U J_p ( U_p^{s,k} +t(U_p^{s,k+1}-U_p^{s,k}) ), U_p^{s,k+1}-U_p^{s,k} \rangle dt .
			\end{aligned}
		\end{equation*}
		By the Cauchy--Schwarz inequality, it follows that
		\begin{equation*}
			\begin{aligned}
				& |J_p(U_p^{s,k+1})-J_p(U_p^{s,k}) | \leq \sup_{t\in[0,1]} \| \nabla_U J_p ( U_p^{s,k} +t(U_p^{s,k+1}-U_p^{s,k}) ) \|_F  \|U_p^{s,k+1}-U_p^{s,k} \|_F .
			\end{aligned}
		\end{equation*}
		Moreover, since $ \nabla_U J_p(U) = \nabla_U f_p(U,V^{s+1}) - \nabla_U f_p(U,V_p^{s,k}),$ the cross-Lipschitz continuity of $\nabla_U f_p$ with respect to $V$ gives
		\begin{equation*}
			\begin{aligned}
				\|\nabla_U J_p(U) \|_F
				&= \| \nabla_U f_p(U,V^{s+1}) - \nabla_U f_p(U,V_p^{s,k}) \|_F \leq L_{1,p} \|V^{s+1}-V_p^{s,k} \|_F .
			\end{aligned}
		\end{equation*}
		Therefore,
		\begin{equation}\label{U_cross_eval_error}
			\begin{aligned}
				&( f_p(U_p^{s,k+1},V^{s+1}) - f_p(U_p^{s,k+1},V_p^{s,k})) -(  f_p(U_p^{s,k},V^{s+1}) - f_p(U_p^{s,k},V_p^{s,k}))\\
				&\leq L_{1,p} \|V^{s+1}-V_p^{s,k}\|_F \|U_p^{s,k+1}-U_p^{s,k}\|_F.
			\end{aligned}
		\end{equation}
		Applying Young's inequality gives
		\begin{equation}\label{U_cross_eval_young}
			\begin{aligned}
				& L_{1,p} \|V^{s+1}-V_p^{s,k}\|_F \|U_p^{s,k+1}-U_p^{s,k}\|_F \\ &\leq \frac{\rho_1}{4} \|\Delta U_p^{s,k}\|_F^2 + \frac{L_{1,p}^2}{\rho_1} \|V^{s+1}-V_p^{s,k}\|_F^2.
			\end{aligned}
		\end{equation}
		Combining \eqref{U_step_descent_difference}, \eqref{U_cross_eval_decomposition}, \eqref{U_cross_eval_error}, and \eqref{U_cross_eval_young}, we obtain
		\begin{equation}\label{U_single_cross_descent}
			\begin{aligned}
				&f_p(U_p^{s,k+1},V^{s+1}) - f_p(U_p^{s,k},V^{s+1})  \\
                &\leq -\frac{\rho_1}{4} \|\Delta U_p^{s,k}\|_F^2 + \frac{L_{1,p}^2}{\rho_1} \|V^{s+1}-V_p^{s,k}\|_F^2 .
			\end{aligned}
		\end{equation}
		
		Summing \eqref{U_single_cross_descent} over $k=0,\cdots,\tau-1$ gives
		\begin{equation*}
			\begin{aligned}
				&f_p(U_p^{s,\tau},V^{s+1}) - f_p(U_p^{s,0},V^{s+1})  \\
                &\leq -\frac{\rho_1}{4} \sum_{k=0}^{\tau-1} \|\Delta U_p^{s,k}\|_F^2 + \frac{1}{\rho_1} \sum_{k=0}^{\tau-1} L_{1,p}^2 \|V^{s+1}-V_p^{s,k}\|_F^2 .
			\end{aligned}
		\end{equation*}
		Using $U_p^{s+1,0}=U_p^{s,\tau}$, summing over all clients,
		and multiplying by $1/m$, it has
		\begin{equation}\label{57}
			\begin{aligned}
				&\frac1m\sum_{p=1}^{m}f_p(U_p^{s+1,0},V^{s+1}) - \frac1m\sum_{p=1}^{m}
				f_p(U_p^{s,0},V^{s+1}) \\
				&\leq -\frac{\rho_1}{4m} \sum_{p=1}^{m}\sum_{k=0}^{\tau-1} \|\Delta U_p^{s,k}\|_F^2  + \frac{1}{m\rho_1} \sum_{p=1}^{m}\sum_{k=0}^{\tau-1} L_{1,p}^2 \|V^{s+1}-V_p^{s,k}\|_F^2 .
			\end{aligned}
		\end{equation}
		Since $h(V^{s+1})$ is independent of $\{U_p\}$, the above inequality proves \eqref{roundwise_U_descent_extended}.
		
		Finally, because $L_{3,p}\leq L_{3,a}$, it has $\|\Delta U_p^{s,k}\|_F^2 \geq \frac{L_{3,p}^2}{L_{3,a}^2} \|\Delta U_p^{s,k}\|_F^2.$
		Substituting this inequality into \eqref{57} yields \eqref{roundwise_U_descent_extended}. This completes the proof.
	\end{proof}	
	\section{The proof of Lemma 5}\label{secA4}
	\begin{proof}
		Under Assumption \ref{assumption2}, it ensures that $U_p^{s,k+1}$ is a global minimizer, and hence a local minimizer, of subproblem~\eqref{Um1}. Since the objective is continuously differentiable and $\mathcal O_p$ is closed, the generalized Fermat rule gives
		\begin{equation*}\label{eq:exact-U-optimality}
			0\in\nabla_{U_p}f_p(U_p^{s,k+1},V_p^{s,k})+\rho_1(U_p^{s,k+1}-U_p^{s,k})+N_{\mathcal O_p}(U_p^{s,k+1}).
		\end{equation*}
		Therefore, there exists $Z_p^{s,k+1}\in N_{\mathcal O_p}(U_p^{s,k+1})$ such that
		\begin{equation*}\label{eq:exact-U-normal}
			\nabla_{U_p}f_p(U_p^{s,k+1},V_p^{s,k})+\rho_1(U_p^{s,k+1}-U_p^{s,k})+Z_p^{s,k+1}=0.
		\end{equation*}
		Using this normal vector in the definition of $r_p^{s,k+1}$, it obtains
		\begin{equation*}
			\begin{aligned}
				r_p^{s,k+1}
				&\leq\|\nabla_{U_p}f_p(U_p^{s,k+1},V^s)+Z_p^{s,k+1}\|_F^2\\
				&=\|\nabla_{U_p}f_p(U_p^{s,k+1},V^s)-\nabla_{U_p}f_p(U_p^{s,k+1},V_p^{s,k})-\rho_1(U_p^{s,k+1}-U_p^{s,k})\|_F^2\\
				&\leq2L_{1,p}^2\|V_p^{s,k}-V^s\|_F^2+2\rho_1^2\|\Delta U_p^{s,k}\|_F^2,
			\end{aligned}
		\end{equation*}
		where the last inequality follows from the cross Lipschitz continuity of $\nabla_{U_p}f_p$ and $\|A+B\|_F^2\leq2\|A\|_F^2+2\|B\|_F^2$. This completes the proof.
		
	\end{proof}  
	\end{appendices}
	

	
\end{document}